\newtheorem{theorem}{Theorem}[section]
\newtheorem{corollary}[theorem]{Corollary}
\theoremstyle{definition}
\newtheorem{definition}[theorem]{Definition}
\theoremstyle{remark}
\newtheorem{remark}[theorem]{Remark}
\numberwithin{equation}{section}
\begin{document}

\setcounter{page}{1}

\title[An extension of the $\rho$-operator radii]
{An extension of the $\rho$-operator radii}

\author[F.~Kittaneh \MakeLowercase{and} A.~Zamani]
{Fuad Kittaneh$^{1}$ \MakeLowercase{and} Ali Zamani$^{2}$}

\address{$^1$Department of Mathematics, The University of Jordan, Amman, Jordan}
\email{fkitt@ju.edu.jo}

\address{$^2$
Department of Mathematics, Farhangian University, Tehran, Iran
\&
School of Mathematics and Computer Sciences, Damghan University, P.O.BOX 36715-364, Damghan, Iran}
\email{zamani.ali85@yahoo.com}

\subjclass[2010]{47A12, 47A20, 47A30, 46C05.}

\keywords{Block matrix, dilation, inequality, operator radius.}
\begin{abstract}
We define a function on the $C^{\ast}$-algebra of all bounded linear Hilbert space operators,
which generalizes the operator radii, and we present some basic properties of this function.
Our results extend several results in the literature.
\end{abstract} \maketitle
\section{Introduction}
Let $\mathbb{B}(\mathcal{H})$ denote the $C^{\ast}$-algebra of all bounded
linear operators on a complex Hilbert space $\big(\mathcal{H}, \langle \cdot, \cdot\rangle \big)$
and $I$ stand for the identity operator on $\mathcal{H}$.
Every operator $X\in \mathbb{B}(\mathcal{H})$ can be represented as $X = {\rm Re}(X) + i{\rm Im}(X)$,
the Cartesian decomposition, where ${\rm Re}(X)= \frac{X+X^*}{2}$ and ${\rm Im}(X)= \frac{X-X^*}{2i}$
are the real and imaginary parts of $X$, respectively.
By $|X|$ we denote the positive square root of $X^*X$, that is, $|X| = (X^*X)^{1/2}$.
Let $X = V|X|$ be the polar decomposition of $X$, where $V$ is some partial isometry.
The polar decomposition satisfies
\begin{align*}
V^*X = |X|, \, V^*V|X| = |X|, \, V^*VX = X, \, X^*=|X|V^*, \, |X^*| = V|X|V^*.
\end{align*}
For $X\in\mathbb{B}(\mathcal{H})$ with a polar decomposition $X = V|X|$, the Aluthge
transform of $X$ is given by $\widetilde{X} = {|X|}^{\frac{1}{2}}V{|X|}^{\frac{1}{2}}$ (see \cite{Aluthge.1966}).
For $\rho>0$ an operator $X\in\mathbb{B}(\mathcal{H})$ is
called a $\rho$-contraction (see \cite{Nagy.Foias.1966}) if there is a Hilbert space $\mathcal{K}(\supseteq \mathcal{H})$
and a unitary operator $U$ on $\mathcal{K}$
such that $X^nx = \rho PU^nx$ for all $x\in\mathcal{H}, n= 1, 2, \cdots$,
where $P$ is the orthogonal projection from $\mathcal{K}$ to $\mathcal{H}$.
Holbrook \cite{Holbrook.1968} and Williams \cite{Williams.1968} defined the operator
radii $w_{\rho}(\cdot)$ as the generalized Minkowski distance functionals
on $\mathbb{B}(\mathcal{H})$, i.e.,
\begin{align*}
w_{\rho}(X)=\inf\left\{t>0: \,t^{-1}X \,\,\mbox{is a $\rho$-contraction}\right\}.
\end{align*}
The operator radius $w_{\rho}(\cdot)$ plays a very important role in the study of unitary
$\rho$-dilations (see, e.g., \cite{Nagy.Foias.Bercovici.Kerchy.2010}).
It is well known that $w_{\rho}(U^*XU) =w_{\rho}(X)$ for all $X$ and all unitary $U\in\mathbb{B}(\mathcal{H})$,
i.e., $w_{\rho}(\cdot)$ is weakly unitarily invariant.
Moreover, $\rho$-radii have the properties:
\begin{align*}
w_1(X) = \|X\|,
\end{align*}
where $\|\!\cdot\!\|$ is the Hilbert space operator norm, that is, $\|X\|= \displaystyle{\sup_{{\|z\|=1}}}\|Xz\|$ and
\begin{align*}
w_2(X) = w(X),
\end{align*}
where $w(\cdot)$ is the numerical radius, that is,
$w(X) = \displaystyle{\sup_{{\|z\|=1}}}|\langle Xz, z\rangle|$.
An important and useful identity for the numerical radius (see \cite[p.~372]{Haagerup.Harpe.1992}) is as follows:
\begin{align}\label{I.12.3}
w(X) = \displaystyle{\sup_{\theta \in \mathbb{R}}}\big\|{\rm Re}(e^{i\theta}X)\big\|.
\end{align}
A very lucid account of the numerical ranges of Hilbert
space operators with applications and complete bibliographical information can be
found in the books of Gustafson--Rao \cite{G.R}, Wu--Gau \cite{W.G} and
Bhunia--Dragomir--Moslehian--Paul \cite{Bhunia.Dragomir.Moslehian.Paul}.

For every $X\in\mathbb{B}(\mathcal{H})$, we also have
\begin{align}\label{I.12.4}
\frac{1}{\rho}\|X\|\leq w_{\rho}(X)\leq \|X\|.
\end{align}
If $X$ is normal, then
$w_{\rho}(X)=\begin{cases}
(2\rho^{-1}-1)\|X\| &\text{if\, $0<\rho<1$}\\
\|X\| &\text{if\, $\rho\geq1$}
\end{cases}$
and if $X$ is $2$-nilpotent, then $w_{\rho}(X)=\frac{1}{\rho}\|X\|$.
Further, $\displaystyle{\lim_{\rho\rightarrow \infty}}w_{\rho}(X)=r(X)$,
where $r(X)$ is the spectral radius of $X\in\mathbb{B}(\mathcal{H})$.
Notice that there is a major difference between
the cases when $0 < \rho \leq 2$ and $2 < \rho < \infty$.
It is known that for $\rho\in (0, 2]$, $w_{\rho}(\cdot)$ is a norm on $\mathbb{B}(\mathcal{H})$
but for $\rho\in (2, \infty)$ is only a quasi-norm.
For proofs and more facts about the operator radii, we refer the reader to
\cite{Ando.Li.2010, Holbrook.1968, Holbrook.1971, K.Z.LAA.2024, Okubo.Ando.1975, Okubo.Ando.1976, Nagy.Foias.Bercovici.Kerchy.2010, Williams.1968, Wu.JPT.1997}.

In this paper, inspired by \cite{Abu-Omar.Kittaneh.LAA2019, Conde.Sababheh.Moradi.2022, M.Z.BJMA.2023, S.K.S.AFA.2022, Z.LAA.2023, Z.W.LAA.2021},
we define a function on $\mathbb{B}(\mathcal{H})$, which generalizes the operator
radii $w_{\rho}(\cdot)$, and we present some basic properties of this function.
Our results extend results in \cite{Abu-Omar.Kittaneh.RMJM.2015, Kittaneh.2003, Kittaneh.2005, Yamazaki.2007}.
\section{Results}
We begin our work with the following definition. Recall that a general $2\times2$ operator matrix in
$\mathbb{B}(\mathcal{H}\oplus\mathcal{H})$ is an operator of the form
$\begin{bmatrix}
S & X \\
Y & T
\end{bmatrix}$, where $X, Y, S, T\in\mathbb{B}(\mathcal{H})$.
\begin{definition}\label{D.1.2}
Let $\rho\in (0, 2]$ and $\nu \in [0, 1]$. The function $\Delta_{_{(\rho,\nu)}}(\cdot)\colon \mathbb{B}(\mathcal{H}) \rightarrow [0,+\infty)$ is defined as
\begin{align*}
\Delta_{_{(\rho,\nu)}}(X)= w\left(\begin{bmatrix}
0 & \alpha X \\
\alpha(1-2\nu)X^*& \beta\left(X+(1-2\nu)X^*\right)
\end{bmatrix}\right),
\end{align*}
where $\alpha=\sqrt{8{\rho}^{-1}-4}$ and $\beta=2{\rho}^{-1}-2$.
\end{definition}
\begin{remark}\label{R.4.42}
From now on, $\rho\in (0, 2]$, $\nu \in [0, 1]$, $\alpha=\sqrt{8{\rho}^{-1}-4}$ and $\beta=2{\rho}^{-1}-2$, unless stated otherwise.
\end{remark}
\begin{remark}\label{R.4.2}
Obviously,
\begin{align*}
\Delta_{_{(\rho,\frac{1}{2})}}(X)= w\left(\begin{bmatrix}
0 & \alpha X \\
0 & \beta X
\end{bmatrix}\right)=
\frac{2}{\rho}\,w\left(\begin{bmatrix}
0 & \sqrt{\rho(2-\rho)}X \\
0 & (1-\rho)X
\end{bmatrix}\right),
\end{align*}
and by \cite[Theorem~3.1]{K.Z.LAA.2024} it follows that
\begin{align}\label{I.13.1}
\Delta_{_{(\rho,\frac{1}{2})}}(X)=w_{\rho}(X).
\end{align}
In particular,
\begin{align*}
\Delta_{_{(1,\frac{1}{2})}}(X)= \|X\| \quad \mbox{and} \quad \Delta_{_{(2,\frac{1}{2})}}(X)= w(X).
\end{align*}
Hence, $\Delta_{_{(\rho,\nu)}}(\cdot)$ generalizes the operator radii $w_{\rho}(\cdot)$.
From \eqref{I.13.1} and \eqref{I.12.4} we also have
\begin{align*}
\frac{1}{\rho} \|X\|\leq \Delta_{_{(\rho,\frac{1}{2})}}(X)\leq \|X\|.
\end{align*}
\end{remark}
\begin{remark}\label{R.4.3}
Since $\left\|\begin{bmatrix}
0 & A\\
A^*& 0
\end{bmatrix}\right\| = \|A\|$ for any operator $A$, we have
\begin{align*}
\Delta_{_{(1,\nu)}}(X)&= 2\,w\left(\begin{bmatrix}
0 & X \\
(1-2\nu)X^*& 0
\end{bmatrix}\right)
\\& = 2\,\displaystyle{\sup_{\theta \in \mathbb{R}}}
\left\|{\rm Re}\left(e^{i\theta}\begin{bmatrix}
0 & X \\
(1-2\nu)X^*& 0
\end{bmatrix}\right)\right\|
\\& = \displaystyle{\sup_{\theta \in \mathbb{R}}}
\left\|\begin{bmatrix}
0 & e^{i\theta}X+e^{-i\theta}(1-2\nu)X\\
\left(e^{i\theta}X+e^{-i\theta}(1-2\nu)X\right)^*& 0
\end{bmatrix}\right\|
\\& = \displaystyle{\sup_{\theta \in \mathbb{R}}}
\left\|e^{i\theta}X+e^{-i\theta}(1-2\nu)X\right\|
\\& = \|X\|\,\displaystyle{\sup_{\theta \in \mathbb{R}}}
\left|1+e^{-2i\theta}(1-2\nu)\right|,
\end{align*}
and hence
\begin{align}\label{I.13.2}
\Delta_{_{(1,\nu)}}(X)=\left(1+|1-2\nu|\right)\|X\|.
\end{align}
In particular,
\begin{align*}
\Delta_{_{(1,0)}}(X)= \Delta_{_{(1,1)}}(X) = 2\|X\|.
\end{align*}
Since $0\leq \nu \leq 1$, it is clear that \eqref{I.13.2} implies
\begin{align*}
\|X\|\leq \Delta_{_{(1,\nu)}}(X) \leq 2\|X\|.
\end{align*}
\end{remark}
\begin{remark}\label{R.4.4}
Since $w\left(\begin{bmatrix}
0 & 0\\
0& -A
\end{bmatrix}\right) = w(A)$ for any operator $A$, we have
\begin{align}\label{R.4.4.I.1}
\Delta_{_{(2,\nu)}}(X)= w\left(\begin{bmatrix}
0 & 0\\
0 & -(X+(1-2\nu)X^*)
\end{bmatrix}\right)
= w\left(X+(1-2\nu)X^*\right).
\end{align}
In particular,
\begin{align*}
\Delta_{_{(2,0)}}(X)= \Delta_{_{(2,1)}}(X) = 2\|{\rm Re}(X)\|.
\end{align*}
\end{remark}
The following theorem states some basic properties of the function $\Delta_{_{(\rho,\nu)}}(\cdot)$.
\begin{theorem}\label{L.0066}
Let $X, Y, Z\in \mathbb{B}(\mathcal{H})$ be such that $Z$ is positive.
The following properties hold:
\begin{itemize}
\item[(i)] $\Delta_{_{(\rho,\nu)}}(X+Y)\leq \Delta_{_{(\rho,\nu)}}(X)+\Delta_{_{(\rho,\nu)}}(Y)$.
\item[(ii)] $\Delta_{_{(\rho,\nu)}}(tX)=|t|\Delta_{_{(\rho,\nu)}}(X)$ for any $t\in\mathbb{R}$.
\item[(iii)] $\Delta_{_{(\rho,\nu)}}(X)> 0$ if $X\neq 0$.
\item[(iv)] $\Delta_{_{(\rho,\nu)}}(U^*XU)= \Delta_{_{(\rho,\nu)}}(X)$ for any unitary $U$.
\item[(v)] $\Delta_{_{(\rho,\nu)}}(Y^*XY)\leq {\|Y\|}^2\Delta_{_{(\rho,\nu)}}(X)$.
\item[(vi)] $\Delta_{_{(\rho,\nu)}}(Z^sXZ^s)\leq \Delta^s_{_{(\rho,\nu)}}(ZXZ)\Delta^{1-s}_{_{(\rho,\nu)}}(X)$
for any $s\in[0, 1]$.
\item[(vii)] $\displaystyle{\lim_{\rho\rightarrow 0}}\,\rho\,\Delta_{_{(\rho,\nu)}}(X)=2w\left(X+(1-2\nu)X^*\right)$.
\end{itemize}
\end{theorem}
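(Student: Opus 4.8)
The plan is to write $T_X$ for the $2\times 2$ block matrix appearing in Definition~\ref{D.1.2}, so that $\Delta_{_{(\rho,\nu)}}(X)=w(T_X)$, and then to deduce each item from a standard property of the numerical radius on $\mathbb{B}(\mathcal{H}\oplus\mathcal{H})$. Two structural observations do most of the work: (a) the map $X\mapsto T_X$ is real-linear, since every entry is a real-linear combination of $X$ and $X^*$; and (b) for every $A\in\mathbb{B}(\mathcal{H})$, direct multiplication gives $T_{A^*XA}=(A\oplus A)^*\,T_X\,(A\oplus A)$, where $A\oplus A:=\begin{bmatrix}A&0\\0&A\end{bmatrix}$ (self-adjointness of $A$ is not needed, because the bottom-left entry of the right-hand side is $\alpha(1-2\nu)A^*X^*A=\alpha(1-2\nu)(A^*XA)^*$). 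In particular $T_{U^*XU}=(U\oplus U)^*T_X(U\oplus U)$ with $U\oplus U$ unitary whenever $U$ is, and $\|A\oplus A\|=\|A\|$.

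Granting (a) and (b), items (i)--(v) are short. Parts (i) and (ii) are the triangle inequality and $\mathbb{R}$-homogeneity of the norm $w$ precomposed with the real-linear map $X\mapsto T_X$; part (iv) is the weak unitary invariance $w\bigl((U\oplus U)^*T_X(U\oplus U)\bigr)=w(T_X)$; and part (v) follows from the elementary estimate $w(B^*SB)\le\|B\|^2 w(S)$ --- read off from $|\langle B^*SB\xi,\xi\rangle|=|\langle S(B\xi),B\xi\rangle|\le w(S)\|B\xi\|^2$ --- applied with $B=Y\oplus Y$, together with $\|Y\oplus Y\|=\|Y\|$. For (iii) with $\rho<2$: since $w$ is a norm and $\alpha=\sqrt{8\rho^{-1}-4}>0$, the equality $\Delta_{_{(\rho,\nu)}}(X)=0$ forces $T_X=0$, hence $\alpha X=0$, hence $X=0$; the borderline case $\rho=2$, where $\alpha=0$, follows from the reduction $\Delta_{_{(2,\nu)}}(X)=w\bigl(X+(1-2\nu)X^*\bigr)$ of Remark~\ref{R.4.4}.

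Part (vi) is the one needing real work. Set $P:=Z\oplus Z\ (\ge 0)$ and $S:=T_X$, so that $P^s=Z^s\oplus Z^s$ and, by (b), $P^sSP^s=T_{Z^sXZ^s}$ and $PSP=T_{ZXZ}$; thus it suffices to prove the Heinz-type numerical radius inequality
\[
w(P^sSP^s)\le w(PSP)^s\,w(S)^{1-s}\qquad(P\ge 0,\ s\in[0,1]).
\]
For this I would combine the identity \eqref{I.12.3}, $w(A)=\sup_{\theta\in\mathbb{R}}\|{\rm Re}(e^{i\theta}A)\|$, with the fact that ${\rm Re}(e^{i\theta}P^sSP^s)=P^s\,{\rm Re}(e^{i\theta}S)\,P^s$ (because $P^s$ is self-adjoint), reducing matters to the operator-norm Heinz inequality $\|P^sHP^s\|\le\|PHP\|^s\|H\|^{1-s}$ for self-adjoint $H$, and then to the scalar fact $\sup_\theta a_\theta^s b_\theta^{1-s}\le(\sup_\theta a_\theta)^s(\sup_\theta b_\theta)^{1-s}$ applied with $a_\theta=\|{\rm Re}(e^{i\theta}PSP)\|$ and $b_\theta=\|{\rm Re}(e^{i\theta}S)\|$, whose suprema over $\theta$ are $w(PSP)$ and $w(S)$. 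The Heinz inequality itself I would obtain from Hadamard's three-lines theorem applied (against pairs of vectors) to the analytic family $z\mapsto P^zHP^z$ on the strip $0\le{\rm Re}\,z\le1$: for invertible $P$ the operators $P^{it}$ are unitary, so $\|P^{it}HP^{it}\|=\|H\|$ on one edge and $\|P^{1+it}HP^{1+it}\|=\|PHP\|$ on the other, and the general $P\ge 0$ follows by applying this to $P+\varepsilon I$ and letting $\varepsilon\to0^+$ (using norm-continuity of $t\mapsto t^s$ on $[0,\infty)$). This appeal to the three-lines theorem --- equivalently, a citation of the Heinz inequality --- is the main obstacle; everything else is bookkeeping.

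Finally, for (vii) I would only compute the limits of the coefficients: as $\rho\to0^+$ one has $\rho\alpha=\sqrt{8\rho-4\rho^2}\to 0$ and $\rho\beta=2-2\rho\to 2$, so
\[
\rho\,T_X\longrightarrow\begin{bmatrix}0&0\\0&2\bigl(X+(1-2\nu)X^*\bigr)\end{bmatrix}
\]
in operator norm, by entrywise convergence. Since $w$ is continuous on $\mathbb{B}(\mathcal{H}\oplus\mathcal{H})$ (it is equivalent to the operator norm) and $w\Bigl(\begin{bmatrix}0&0\\0&A\end{bmatrix}\Bigr)=w(A)$ as in Remark~\ref{R.4.4}, we obtain $\rho\,\Delta_{_{(\rho,\nu)}}(X)=w(\rho\,T_X)\to 2\,w\bigl(X+(1-2\nu)X^*\bigr)$.
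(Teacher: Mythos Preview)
Your treatment of (i)--(vi) mirrors the paper's: pass to the block matrix $T_X$ (the paper's $\mathbf{H}$), conjugate by $A\oplus A$, and for (v)--(vi) combine $w(A)=\sup_\theta\|\mathrm{Re}(e^{i\theta}A)\|$ with the operator-norm Heinz inequality, exactly as the authors do. (Your handling of (iii) at $\rho=2$ is incomplete --- the reduction to $w(X+(1-2\nu)X^*)$ does not by itself give positivity for every $X\neq 0$, e.g.\ $\nu=0$, $X=iI$ --- but the paper's one-line appeal to ``basic properties of $w$'' has the same lacuna; this is a defect of the statement, not of your argument.) The genuine divergence is in (vii). You argue by continuity: $\rho\alpha=\sqrt{8\rho-4\rho^2}\to 0$ and $\rho\beta=2-2\rho\to 2$, so $\rho\,T_X\to\begin{bmatrix}0&0\\0&2(X+(1-2\nu)X^*)\end{bmatrix}$ in norm, and $w$ is norm-continuous. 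The paper instead first proves the exact duality $(2-\rho)\,\Delta_{(2-\rho,\nu)}(X)=\rho\,\Delta_{(\rho,\nu)}(X)$ for all $\rho\in(0,2)$, via conjugation by $\mathrm{diag}(I,-I)$, and only then lets $\rho\to 0$ so that the left side becomes $2\Delta_{(2,\nu)}(X)$. Your route is shorter and self-contained; the paper's route costs an extra computation but yields the $\rho\leftrightarrow 2-\rho$ symmetry as a dividend, extending the classical relation $\rho\,w_\rho=(2-\rho)\,w_{2-\rho}$.
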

\begin{proof}
Statements (i)--(iii) follow from Definition \ref{D.1.2} and basic properties of the norm $w(\cdot)$.

We will assume that $\mathbf{H} = \begin{bmatrix}
0 & \alpha X\\
\alpha(1-2\nu)X^*& \beta\left(X+(1-2\nu)X^*\right)
\end{bmatrix}$, to simplify notation.

(iv) Let $U$ be unitary. Put $\mathbf{U} = \begin{bmatrix}
U & 0 \\
0 & U
\end{bmatrix}$. It is easy to see that $\mathbf{U}$ is unitary. Since the norm $w(\cdot)$ is weakly unitarily invariant, we have
\begin{align*}
\Delta_{_{(\rho,\nu)}}(U^*XU)&= w\left(\begin{bmatrix}
0 & \alpha U^*XU \\
\alpha(1-2\nu)U^*X^*U& \beta\left(U^*XU+(1-2\nu)U^*X^*U\right)
\end{bmatrix}\right)
\\& = w\left(\begin{bmatrix}
U^* & 0\\
0& U^*
\end{bmatrix}\begin{bmatrix}
0 & \alpha X\\
\alpha(1-2\nu)X^*& \beta\left(X+(1-2\nu)X^*\right)
\end{bmatrix}\begin{bmatrix}
U & 0\\
0& U
\end{bmatrix}\right)
\\& = w\left({\mathbf{U}}^*\mathbf{H}\mathbf{U}\right)
= w\left(\mathbf{H}\right) = \Delta_{_{(\rho,\nu)}}(X).
\end{align*}
(v) Let $\mathbf{Y} = \begin{bmatrix}
Y & 0 \\
0 & Y
\end{bmatrix}$. Then $\|\mathbf{Y}\|=\|Y\|$.
Since ${\rm Re}\left(e^{i\theta}{\mathbf{Y}}^*\mathbf{H}\mathbf{Y}\right)={\mathbf{Y}}^*{\rm Re}\left(e^{i\theta}\mathbf{H}\right)\mathbf{Y}$,
by a similar argument as (iv) and by using \eqref{I.12.3}, we have
\begin{align*}
\Delta_{_{(\rho,\nu)}}(Y^*XY)&= w\left({\mathbf{Y}}^*\mathbf{H}\mathbf{Y}\right)
\\& = \displaystyle{\sup_{\theta \in \mathbb{R}}}
\left\|{\rm Re}\left(e^{i\theta}{\mathbf{Y}}^*\mathbf{H}\mathbf{Y}\right)\right\|
\\& = \displaystyle{\sup_{\theta \in \mathbb{R}}}
\left\|{\mathbf{Y}}^*{\rm Re}\left(e^{i\theta}\mathbf{H}\right)\mathbf{Y}\right\|
\\& \leq {\|\mathbf{Y}\|}^2
\displaystyle{\sup_{\theta \in \mathbb{R}}}
\left\|{\rm Re}\left(e^{i\theta}\mathbf{H}\right)\right\|
={\|\mathbf{Y}\|}^2
w\left(\mathbf{H}\right),
\end{align*}
and hence $\Delta_{_{(\rho,\nu)}}(Y^*XY)\leq {\|Y\|}^2\Delta_{_{(\rho,\nu)}}(X)$.

(vi) Let $\mathbf{Z} = \begin{bmatrix}
Z & 0 \\
0 & Z
\end{bmatrix}$. Then $\mathbf{Z}$ is positive.
Let us recall the following classical norm inequality of Heinz \cite{Heinz.1951}:
\begin{align*}
\left\|B^sAC^s\right\|\leq {\left\|BAC\right\|}^{s}{\|A\|}^{1-s}
\end{align*}
for arbitrary operators $A$, and operators $B, C\geq 0$ and real number $s$, $0\leq s\leq1$.
Utilizing a similar argument as (v), the Heinz inequality and \eqref{I.12.3} we have
\begin{align*}
\Delta_{_{(\rho,\nu)}}(Z^sXZ^s)&=
\displaystyle{\sup_{\theta \in \mathbb{R}}}
\left\|{\mathbf{Z}}^{s}{\rm Re}\left(e^{i\theta}\mathbf{H}\right){\mathbf{Z}}^{s}\right\|
\\& \leq \displaystyle{\sup_{\theta \in \mathbb{R}}}
\left({\left\|\mathbf{Z}{\rm Re}\left(e^{i\theta}\mathbf{H}\right)\mathbf{Z}\right\|}^{s}
{\left\|{\rm Re}\left(e^{i\theta}\mathbf{H}\right)\right\|}^{1-s}\right)
\\& =\displaystyle{\sup_{\theta \in \mathbb{R}}}
\left({\left\|{\rm Re}\left(e^{i\theta}\mathbf{Z}\mathbf{H}\mathbf{Z}\right)\right\|}^{s}
{\left\|{\rm Re}\left(e^{i\theta}\mathbf{H}\right)\right\|}^{1-s}\right)
\\& \leq \left(\displaystyle{\sup_{\theta \in \mathbb{R}}}
{\left\|{\rm Re}\left(e^{i\theta}\mathbf{Z}\mathbf{H}\mathbf{Z}\right)\right\|}^{s}\right)
\left(\displaystyle{\sup_{\theta \in \mathbb{R}}}{\left\|{\rm Re}\left(e^{i\theta}\mathbf{H}\right)\right\|}^{1-s}\right)
\\& =w^{s}\left(\mathbf{Z}\mathbf{H}\mathbf{Z}\right)w^{1-s}(\mathbf{H})=\Delta^s_{_{(\rho,\nu)}}(ZXZ)\Delta^{1-s}_{_{(\rho,\nu)}}(X).
\end{align*}

(vii) Let $0<\rho<2$ and let $\mathbf{U} = \begin{bmatrix}
I & 0 \\
0 & -I
\end{bmatrix}$. It is easy to see that $\mathbf{U}$ is unitary and
\begin{align}\label{L.0066.I.1}
{\mathbf{U}}^*\begin{bmatrix}
0 & \alpha X \\
\alpha(1-2\nu)X^*& -\beta\left(X+(1-2\nu)X^*\right)
\end{bmatrix}\mathbf{U} = - \mathbf{H}.
\end{align}
Since the norm $w(\cdot)$ is weakly unitarily invariant, by Definition \ref{D.1.2} and \eqref{L.0066.I.1} we have
\begingroup\makeatletter\def\f@size{10}\check@mathfonts
\begin{align*}
&(2-\rho)\,\Delta_{_{((2-\rho),\nu)}}(X)
\\& \quad = (2-\rho)\,w\left(\begin{bmatrix}
0 & \sqrt{8{(2-\rho)}^{-1}-4}\,X \\
\sqrt{8{(2-\rho)}^{-1}-4}\,(1-2\nu)X^*& \left(2{(2-\rho)}^{-1}-2\right)\left(X+(1-2\nu)X^*\right)
\end{bmatrix}\right)
\\& \quad = (2-\rho)\,w\left(\begin{bmatrix}
0 & \sqrt{4\rho{(2-\rho)}^{-1}}\,X \\
\sqrt{4\rho{(2-\rho)}^{-1}}\,(1-2\nu)X^*& \left(2\rho-2\right){(2-\rho)}^{-1}\left(X+(1-2\nu)X^*\right)
\end{bmatrix}\right)
\\& \quad = w\left(\begin{bmatrix}
0 & \sqrt{4\rho(2-\rho)}\,X \\
\sqrt{4\rho(2-\rho)}\,(1-2\nu)X^*& \left(2\rho-2\right)\left(X+(1-2\nu)X^*\right)
\end{bmatrix}\right)
\\& \quad = \rho\,w\left(\begin{bmatrix}
0 & \sqrt{8{\rho}^{-1}-4}\,X \\
\sqrt{8{\rho}^{-1}-4}\,(1-2\nu)X^*& -\left(2{\rho}^{-1}-2\right)\left(X+(1-2\nu)X^*\right)
\end{bmatrix}\right)
\\& \quad = \rho\,w\left({\mathbf{U}}^*\begin{bmatrix}
0 & \alpha X \\
\alpha(1-2\nu)X^*& -\beta\left(X+(1-2\nu)X^*\right)
\end{bmatrix}\mathbf{U}\right)
\\& \quad = \rho \,w\left(- \mathbf{H}\right) = \rho \,w\left(\mathbf{H}\right)= \rho\,\Delta_{_{(\rho,\nu)}}(X),
\end{align*}
\endgroup
and so $(2-\rho)\,\Delta_{_{((2-\rho),\nu)}}(X) = \rho\,\Delta_{_{(\rho,\nu)}}(X)$.
From this and \eqref{R.4.4.I.1} it follows that
\begin{align*}
\displaystyle{\lim_{\rho\rightarrow 0}}\,\rho\,\Delta_{_{(\rho,\nu)}}(X)=2\Delta_{_{(2,\nu)}}(X)=2w\left(X+(1-2\nu)X^*\right).
\end{align*}
\end{proof}
In the following theorem we give an upper bound for $\Delta_{_{(\rho,\nu)}}(\cdot)$.
\begin{theorem}\label{T.0001}
Let $X\in \mathbb{B}(\mathcal{H})$. Then
\begin{align*}
\Delta_{_{(\rho,\nu)}}(X)
\leq \frac{1+|1-2\nu|}{2}\left(\left\|\begin{bmatrix}
0 & \alpha X \\
0& \beta X
\end{bmatrix}\right\|+w\left(\widetilde{\begin{bmatrix}
0 & \alpha X \\
0& \beta X
\end{bmatrix}}\right)\right).
\end{align*}
\end{theorem}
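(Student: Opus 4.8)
The plan is to realize the block matrix in Definition \ref{D.1.2} as a single off-diagonal operator matrix perturbed by a scalar multiple of its adjoint, and then to reduce the estimate to Yamazaki's numerical radius inequality. Put $\mathbf{T} = \begin{bmatrix} 0 & \alpha X \\ 0 & \beta X \end{bmatrix}$, so that $\mathbf{T}^* = \begin{bmatrix} 0 & 0 \\ \alpha X^* & \beta X^* \end{bmatrix}$. A direct block computation shows
\[
\mathbf{T} + (1-2\nu)\mathbf{T}^* = \begin{bmatrix} 0 & \alpha X \\ \alpha(1-2\nu)X^* & \beta\left(X+(1-2\nu)X^*\right) \end{bmatrix} = \mathbf{H},
\]
hence $\Delta_{_{(\rho,\nu)}}(X) = w\left(\mathbf{T} + (1-2\nu)\mathbf{T}^*\right)$.

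Next I would invoke the identity \eqref{I.12.3} on $\mathcal{H}\oplus\mathcal{H}$. For every $\theta\in\mathbb{R}$ one has ${\rm Re}\left(e^{i\theta}\mathbf{T}^*\right) = {\rm Re}\left(e^{-i\theta}\mathbf{T}\right)$ (both equal $\tfrac12(e^{i\theta}\mathbf{T}^* + e^{-i\theta}\mathbf{T})$), so that
\[
{\rm Re}\left(e^{i\theta}\mathbf{H}\right) = {\rm Re}\left(e^{i\theta}\mathbf{T}\right) + (1-2\nu)\,{\rm Re}\left(e^{-i\theta}\mathbf{T}\right).
\]
Applying the triangle inequality for the operator norm, then taking the supremum over $\theta\in\mathbb{R}$ and using that $\theta\mapsto-\theta$ is a bijection of $\mathbb{R}$ together with \eqref{I.12.3}, we obtain
\[
\Delta_{_{(\rho,\nu)}}(X) = \sup_{\theta\in\mathbb{R}}\left\|{\rm Re}\left(e^{i\theta}\mathbf{H}\right)\right\| \leq \sup_{\theta\in\mathbb{R}}\left\|{\rm Re}\left(e^{i\theta}\mathbf{T}\right)\right\| + |1-2\nu|\sup_{\theta\in\mathbb{R}}\left\|{\rm Re}\left(e^{-i\theta}\mathbf{T}\right)\right\| = \left(1+|1-2\nu|\right)w(\mathbf{T}).
\]

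Finally I would apply Yamazaki's inequality \cite{Yamazaki.2007}, which states that $w(A)\leq\frac12\left(\|A\|+w(\widetilde{A})\right)$ for every bounded operator $A$; taking $A=\mathbf{T}$ gives $w(\mathbf{T})\leq\frac12\left(\|\mathbf{T}\|+w(\widetilde{\mathbf{T}})\right)$. Combining this with the previous display yields exactly the asserted bound. I expect no genuine obstacle here: the one substantive step is spotting the decomposition $\mathbf{H}=\mathbf{T}+(1-2\nu)\mathbf{T}^*$, after which the argument is the same weak-unitary-invariance and real-part machinery used in the proof of Theorem \ref{L.0066}, plus the single citation to \cite{Yamazaki.2007}.
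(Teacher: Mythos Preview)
Your proof is correct. Both you and the paper start from the same key identity $\mathbf{H}=\mathbf{G}+(1-2\nu)\mathbf{G}^*$ (the paper calls your $\mathbf{T}$ by $\mathbf{G}$, and this decomposition appears in the second line of its long display). From there the routes diverge. The paper carries both summands through a self-contained Yamazaki-type argument: it writes $\mathbf{G}=\mathbf{V}|\mathbf{G}|$, uses a polarization identity to rewrite ${\rm Re}\langle|\mathbf{G}|e^{\pm i\theta}z,\mathbf{V}^*z\rangle$, applies $|a-b|\le\max\{a,b\}$ for nonnegative reals, and computes ${|\mathbf{G}|}^{1/2}|e^{\pm i\theta}I\pm\mathbf{V}|^2{|\mathbf{G}|}^{1/2}=2|\mathbf{G}|\pm 2{\rm Re}(e^{\mp i\theta}\widetilde{\mathbf{G}})$ to land on $\tfrac{1+|1-2\nu|}{2}(\|\mathbf{G}\|+w(\widetilde{\mathbf{G}}))$. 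You instead bound the two summands separately via the triangle inequality for $\|{\rm Re}(\cdot)\|$, take suprema over $\theta$ to obtain $w(\mathbf{H})\le(1+|1-2\nu|)w(\mathbf{G})$, and then invoke Yamazaki's inequality $w(\mathbf{G})\le\tfrac12(\|\mathbf{G}\|+w(\widetilde{\mathbf{G}}))$ from \cite{Yamazaki.2007} as a black box. Your argument is considerably shorter and makes transparent that Theorem~\ref{T.0001} is really Yamazaki's bound for $\mathbf{G}$ amplified by the factor $1+|1-2\nu|$; the paper's version has the advantage of being self-contained, so that Corollary~\ref{C.0002} (recovering Yamazaki at $\rho=2$, $\nu=\tfrac12$) is a genuine re-derivation rather than a tautology.
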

\begin{proof}
Put $\mathbf{H} = \begin{bmatrix}
0 & \alpha X\\
\alpha(1-2\nu)X^*& \beta\left(X+(1-2\nu)X^*\right)
\end{bmatrix}$ and $\mathbf{G}=\begin{bmatrix}
0 & \alpha X \\
0& \beta X
\end{bmatrix}$, to simplify the writing.
Let $\mathbf{G} = \mathbf{V}|\mathbf{G}|$ be the polar decomposition of $\mathbf{G}$ and let $\theta \in \mathbb{R}$.
Assume now that $z\in \mathcal{H}\oplus\mathcal{H}$ with $\|z\|=1$. We have
\begingroup\makeatletter\def\f@size{9}\check@mathfonts
\begin{align}\label{T.0001.I.101}
&\left|\left\langle{\rm Re}\left(e^{i\theta}\mathbf{H}\right)z, z\right\rangle\right|\nonumber
= \left|{\rm Re}{\left\langle\left(e^{i\theta}\mathbf{H}\right)z, z\right\rangle}\right|\nonumber
\\& \qquad= \left|{\rm Re}{\left\langle\left(e^{i\theta}\mathbf{G}
+(1-2\nu)e^{i\theta}{\mathbf{G}}^*\right)z, z\right\rangle}\right|\nonumber
\\& \qquad= \Big|{\rm Re}{\langle e^{i\theta}\mathbf{G}z, z\rangle}
+(1-2\nu){\rm Re}{\langle e^{i\theta}{\mathbf{G}}^*z, z\rangle}\Big|\nonumber
\\& \qquad= \Big|{\rm Re}{\langle e^{i\theta}\mathbf{V}|\mathbf{G}|z, z\rangle}
+(1-2\nu){\rm Re}{\langle e^{i\theta}|\mathbf{G}|{\mathbf{V}}^*z, z\rangle}\Big|\nonumber
\\& \qquad= \Big|{\rm Re}{\langle |\mathbf{G}|e^{i\theta}z, {\mathbf{V}}^*z\rangle}
+(1-2\nu){\rm Re}\overline{\langle e^{i\theta}|\mathbf{G}|{\mathbf{V}}^*z, z\rangle}\Big|\nonumber
\\& \qquad= \Big|{\rm Re}{\langle |\mathbf{G}|e^{i\theta}z, {\mathbf{V}}^*z\rangle}
+(1-2\nu){\rm Re}{\langle z, e^{i\theta}|\mathbf{G}|{\mathbf{V}}^*z\rangle}\Big|\nonumber
\\& \qquad= \Big|{\rm Re}{\langle |\mathbf{G}|e^{i\theta}z, {\mathbf{V}}^*z\rangle}
+(1-2\nu){\rm Re}{\langle |\mathbf{G}|e^{-i\theta}z, {\mathbf{V}}^*z\rangle}\Big|\nonumber
\\& \qquad\leq \Big|{\rm Re}{\langle |\mathbf{G}|e^{i\theta}z, {\mathbf{V}}^*z\rangle}\Big|
+|1-2\nu|\Big|{\rm Re}{\langle |\mathbf{G}|e^{-i\theta}z, {\mathbf{V}}^*z\rangle}\Big|\nonumber
\\& \qquad= \frac{1}{4}\Big|\Big\langle|\mathbf{G}|\big(e^{i\theta}z+{\mathbf{V}}^*z\big), \big(e^{i\theta}z+{\mathbf{V}}^*z\big)\Big\rangle
- \Big\langle|\mathbf{G}|\big(e^{i\theta}z-{\mathbf{V}}^*z\big), \big(e^{i\theta}z-{\mathbf{V}}^*z\big)\Big\rangle\Big|\nonumber
\\& \qquad \quad +\frac{|1-2\nu|}{4}\Big|\Big\langle|\mathbf{G}|\big(e^{-i\theta}z+{\mathbf{V}}^*z\big), \big(e^{-i\theta}z+{\mathbf{V}}^*z\big)\Big\rangle
- \Big\langle|\mathbf{G}|\big(e^{-i\theta}z-{\mathbf{V}}^*z\big), \big(e^{-i\theta}z-{\mathbf{V}}^*z\big)\Big\rangle\Big|\nonumber
\\& \qquad= \frac{1}{4}\Big|\Big\langle\big(e^{i\theta}I+{\mathbf{V}}^*\big)^*|\mathbf{G}|\big(e^{i\theta}I+{\mathbf{V}}^*\big)z, z\Big\rangle
- \Big\langle\big(e^{i\theta}I-{\mathbf{V}}^*\big)^*|\mathbf{G}|\big(e^{i\theta}I-{\mathbf{V}}^*\big)z, z\Big\rangle\Big|\nonumber
\\& \qquad \quad +\frac{|1-2\nu|}{4}\Big|\Big\langle\big(e^{-i\theta}I+{\mathbf{V}}^*\big)^*|\mathbf{G}|\big(e^{-i\theta}I+{\mathbf{V}}^*\big)z, z\Big\rangle
- \Big\langle\big(e^{-i\theta}I-{\mathbf{V}}^*\big)^*|\mathbf{G}|\big(e^{-i\theta}I-{\mathbf{V}}^*\big)z, z\Big\rangle\Big|\nonumber
\\& \qquad\leq \frac{1}{4}\max\Big\{\Big\langle\big(e^{i\theta}I+{\mathbf{V}}^*\big)^*|\mathbf{G}|\big(e^{i\theta}I+{\mathbf{V}}^*\big)z, z\Big\rangle
, \Big\langle\big(e^{i\theta}I-{\mathbf{V}}^*\big)^*|\mathbf{G}|\big(e^{i\theta}I-{\mathbf{V}}^*\big)z, z\Big\rangle\Big\}\nonumber
\\& \qquad \quad
+\frac{|1-2\nu|}{4}\max\Big\{\Big\langle\big(e^{-i\theta}I+{\mathbf{V}}^*\big)^*|\mathbf{G}|\big(e^{-i\theta}I+{\mathbf{V}}^*\big)z, z\Big\rangle
, \Big\langle\big(e^{-i\theta}I-{\mathbf{V}}^*\big)^*|\mathbf{G}|\big(e^{-i\theta}I-{\mathbf{V}}^*\big)z, z\Big\rangle\Big\}\nonumber
\\& \qquad \qquad \qquad \qquad \Big(\mbox{since $|a-b|\leq \max\{a, b\}$ for any nonnegative numbers $a, b$}\Big)\nonumber
\\& \qquad\leq \frac{1}{4}\max\Big\{\Big\|\big(e^{i\theta}I+{\mathbf{V}}^*\big)^*|\mathbf{G}|\big(e^{i\theta}I+{\mathbf{V}}^*\big)\Big\|
, \Big\|\big(e^{i\theta}I-{\mathbf{V}}^*\big)^*|\mathbf{G}|\big(e^{i\theta}I-{\mathbf{V}}^*\big)\Big\|\Big\}\nonumber
\\& \qquad \quad +\frac{|1-2\nu|}{4}\max\Big\{\Big\|\big(e^{-i\theta}I+{\mathbf{V}}^*\big)^*|\mathbf{G}|\big(e^{-i\theta}I+{\mathbf{V}}^*\big)\Big\|
, \Big\|\big(e^{-i\theta}I-{\mathbf{V}}^*\big)^*|\mathbf{G}|\big(e^{-i\theta}I-{\mathbf{V}}^*\big)\Big\|\Big\}.
\end{align}
\endgroup
Since
\begin{align*}
\Big\|{\mathbf{K}}^*|\mathbf{G}|\mathbf{K}\Big\|
=\Big\|({|\mathbf{G}|}^{\frac{1}{2}}\mathbf{K})^*{|\mathbf{G}|}^{\frac{1}{2}}\mathbf{K}\Big\|
=\Big\|{|\mathbf{G}|}^{\frac{1}{2}}\mathbf{K}({|\mathbf{G}|}^{\frac{1}{2}}\mathbf{K})^*\Big\|
=\Big\|{|\mathbf{G}|}^{\frac{1}{2}}|{\mathbf{K}}^*|^2{|\mathbf{G}|}^{\frac{1}{2}}\Big\|
\end{align*}
for any operator $\mathbf{K}$, by \eqref{T.0001.I.101} it follows that
\begingroup\makeatletter\def\f@size{10}\check@mathfonts
\begin{align}\label{T.0001.I.1}
&\left|\left\langle{\rm Re}\left(e^{i\theta}\mathbf{H}\right)z, z\right\rangle\right|\nonumber
\leq \frac{1}{4}\max\Big\{\Big\|{|\mathbf{G}|}^{\frac{1}{2}}{\big|e^{-i\theta}I+\mathbf{V}\big|}^2{|\mathbf{G}|}^{\frac{1}{2}}\Big\|
, \Big\|{|\mathbf{G}|}^{\frac{1}{2}}{\big|e^{-i\theta}I-\mathbf{V}\big|}^2{|\mathbf{G}|}^{\frac{1}{2}}\Big\|\Big\}\nonumber
\\& \qquad \qquad\qquad \quad +\frac{|1-2\nu|}{4}\max\Big\{\Big\|{|\mathbf{G}|}^{\frac{1}{2}}{\big|e^{i\theta}I+\mathbf{V}\big|}^2{|\mathbf{G}|}^{\frac{1}{2}}\Big\|
, \Big\|{|\mathbf{G}|}^{\frac{1}{2}}{\big|e^{i\theta}I-\mathbf{V}\big|}^2{|\mathbf{G}|}^{\frac{1}{2}}\Big\|\Big\}.
\end{align}
\endgroup
We have
\begingroup\makeatletter\def\f@size{10}\check@mathfonts
\begin{align*}
{|\mathbf{G}|}^{\frac{1}{2}}{\big|e^{-i\theta}I+\mathbf{V}\big|}^2{|\mathbf{G}|}^{\frac{1}{2}}
&= |\mathbf{G}|+e^{i\theta}{|\mathbf{G}|}^{\frac{1}{2}}\mathbf{V}{|\mathbf{G}|}^{\frac{1}{2}}
+e^{-i\theta}{|\mathbf{G}|}^{\frac{1}{2}}{\mathbf{V}}^*{|\mathbf{G}|}^{\frac{1}{2}}
+{|\mathbf{G}|}^{\frac{1}{2}}{\mathbf{V}}^*\mathbf{V}{|\mathbf{G}|}^{\frac{1}{2}}
\\& = |\mathbf{G}|+e^{i\theta}\widetilde{\mathbf{G}}
+e^{-i\theta}\big(\widetilde{\mathbf{G}}\big)^{*}+ |\mathbf{G}|,
\end{align*}
\endgroup
and so
\begin{align}\label{T.0001.I.2}
{|\mathbf{G}|}^{\frac{1}{2}}{\big|e^{-i\theta}I+\mathbf{V}\big|}^2{|\mathbf{G}|}^{\frac{1}{2}}
= 2|\mathbf{G}|+2{\rm Re}\big(e^{i\theta}\widetilde{\mathbf{G}}\big).
\end{align}
Similarly,
\begin{align}\label{T.0001.I.3}
{|\mathbf{G}|}^{\frac{1}{2}}{\big|e^{-i\theta}I-\mathbf{V}\big|}^2{|\mathbf{G}|}^{\frac{1}{2}}
= 2|\mathbf{G}|-2{\rm Re}\big(e^{i\theta}\widetilde{\mathbf{G}}\big),
\end{align}
\begin{align}\label{T.0001.I.4}
{|\mathbf{G}|}^{\frac{1}{2}}{\big|e^{i\theta}I+\mathbf{V}\big|}^2{|\mathbf{G}|}^{\frac{1}{2}}
= 2|\mathbf{G}|+2{\rm Re}\big(e^{-i\theta}\widetilde{\mathbf{G}}\big),
\end{align}
and
\begin{align}\label{T.0001.I.5}
{|\mathbf{G}|}^{\frac{1}{2}}{\big|e^{i\theta}I-\mathbf{V}\big|}^2{|\mathbf{G}|}^{\frac{1}{2}}
= 2|\mathbf{G}|-2{\rm Re}\big(e^{-i\theta}\widetilde{\mathbf{G}}\big).
\end{align}
Now from \eqref{T.0001.I.1}--\eqref{T.0001.I.5} it follows that
\begingroup\makeatletter\def\f@size{10}\check@mathfonts
\begin{align*}
&\left|\left\langle{\rm Re}\left(e^{i\theta}\mathbf{H}\right)z, z\right\rangle\right|
\\& \qquad\leq \frac{1}{2}\max\Big\{\Big\||\mathbf{G}|+{\rm Re}\big(e^{i\theta}\widetilde{\mathbf{G}}\big)\Big\|
, \Big\||\mathbf{G}|-{\rm Re}\big(e^{i\theta}\widetilde{\mathbf{G}}\big)\Big\|\Big\}
\\& \qquad \quad +\frac{|1-2\nu|}{2}\max\Big\{\Big\||\mathbf{G}|+{\rm Re}\big(e^{-i\theta}\widetilde{\mathbf{G}}\big)\Big\|
, \Big\||\mathbf{G}|-{\rm Re}\big(e^{-i\theta}\widetilde{\mathbf{G}}\big)\Big\|\Big\}
\\& \qquad\leq \frac{1}{2}\Big(\big\||\mathbf{G}|\big\|+\big\|{\rm Re}\big(e^{i\theta}\widetilde{\mathbf{G}}\big)\big\|\Big)
+\frac{|1-2\nu|}{2}\Big(\big\||\mathbf{G}|\big\|+\big\|{\rm Re}\big(e^{-i\theta}\widetilde{\mathbf{G}}\big)\big\|\Big)
\\& \qquad\leq \frac{1}{2}\Big(\big\|\mathbf{G}\big\|+w\big(\widetilde{\mathbf{G}}\big)\Big)
+\frac{|1-2\nu|}{2}\Big(\big\|\mathbf{G}\big\|+w\big(\widetilde{\mathbf{G}}\big)\Big),
\end{align*}
\endgroup
and hence
\begingroup\makeatletter\def\f@size{10}\check@mathfonts
\begin{align}\label{T.0001.I.6}
\left|\left\langle{\rm Re}\left(e^{i\theta}\mathbf{H}\right)z, z\right\rangle\right|
\leq \frac{1+|1-2\nu|}{2}\Big(\big\|\mathbf{G}\big\|+w\big(\widetilde{\mathbf{G}}\big)\Big).
\end{align}
\endgroup
Taking the supremum over $z\in \mathcal{H}\oplus\mathcal{H}$ with $\|z\|=1$ in \eqref{T.0001.I.6}, we get
\begingroup\makeatletter\def\f@size{10}\check@mathfonts
\begin{align}\label{T.0001.I.7}
\left\|{\rm Re}\left(e^{i\theta}\mathbf{H}\right)\right\|
\leq \frac{1+|1-2\nu|}{2}\Big(\big\|\mathbf{G}\big\|+w\big(\widetilde{\mathbf{G}}\big)\Big).
\end{align}
\endgroup
Finally, taking the supremum over $\theta\in\mathbb{R}$ in \eqref{T.0001.I.7}, and by using \eqref{I.12.3}, we deduce that
\begin{align*}
w\left(\mathbf{H}\right)
\leq \frac{1+|1-2\nu|}{2}\Big(\big\|\mathbf{G}\big\|+w\big(\widetilde{\mathbf{G}}\big)\Big),
\end{align*}
and therefore
\begin{align*}
\Delta_{_{(\rho,\nu)}}(X)
\leq \frac{1+|1-2\nu|}{2}\Big(\big\|\mathbf{G}\big\|+w\big(\widetilde{\mathbf{G}}\big)\Big).
\end{align*}
\end{proof}
As a consequence of Theorem \ref{T.0001}, we have the following result.
\begin{corollary}\label{C.0002}
Let $X\in \mathbb{B}(\mathcal{H})$. Then
\begin{align}\label{C.0002.I.001}
w\left(X+(1-2\nu)X^*\right)
\leq \frac{1+|1-2\nu|}{2}\left(\|X\|+w(\widetilde{X})\right).
\end{align}
In particular,
\begin{align}\label{C.0002.I.002}
w(X)\leq \frac{1}{2}\left(\|X\|+w(\widetilde{X})\right).
\end{align}
\end{corollary}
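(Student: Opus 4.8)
The plan is to obtain Corollary \ref{C.0002} as the special case $\rho = 2$ of Theorem \ref{T.0001}. When $\rho = 2$ the constants become $\alpha = \sqrt{8\cdot 2^{-1}-4} = 0$ and $\beta = 2\cdot 2^{-1} - 2 = -1$, so the auxiliary operator $\mathbf{G} = \begin{bmatrix} 0 & \alpha X \\ 0 & \beta X\end{bmatrix}$ from the proof of Theorem \ref{T.0001} collapses to $\mathbf{G} = \begin{bmatrix} 0 & 0 \\ 0 & -X\end{bmatrix}$. On the left-hand side, Remark \ref{R.4.4} (precisely \eqref{R.4.4.I.1}) gives $\Delta_{_{(2,\nu)}}(X) = w\big(X + (1-2\nu)X^*\big)$, which is exactly the quantity on the left of \eqref{C.0002.I.001}. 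So the whole task reduces to evaluating the two ingredients $\|\mathbf{G}\|$ and $w(\widetilde{\mathbf{G}})$ appearing on the right-hand side of the bound in Theorem \ref{T.0001}.

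The norm is immediate, $\|\mathbf{G}\| = \|X\|$, since $\begin{bmatrix} 0 & 0 \\ 0 & -X\end{bmatrix}$ and $X$ agree up to sign on their nonzero part. For the Aluthge transform, I would start from a polar decomposition $X = V|X|$, observe that $|\mathbf{G}| = \begin{bmatrix} 0 & 0 \\ 0 & |X|\end{bmatrix}$, and that $\mathbf{V} := \begin{bmatrix} 0 & 0 \\ 0 & -V\end{bmatrix}$ is a partial isometry with $\mathbf{G} = \mathbf{V}|\mathbf{G}|$, i.e. this is a polar decomposition of $\mathbf{G}$. Hence, by the functional calculus for ${|\mathbf{G}|}^{\frac{1}{2}}$,
\begin{align*}
\widetilde{\mathbf{G}} = {|\mathbf{G}|}^{\frac{1}{2}}\mathbf{V}{|\mathbf{G}|}^{\frac{1}{2}} = \begin{bmatrix} 0 & 0 \\ 0 & {|X|}^{\frac{1}{2}}(-V){|X|}^{\frac{1}{2}}\end{bmatrix} = \begin{bmatrix} 0 & 0 \\ 0 & -\widetilde{X}\end{bmatrix},
\end{align*}
so, using once more that $w\big(\begin{bmatrix} 0 & 0 \\ 0 & -A\end{bmatrix}\big) = w(A)$ (Remark \ref{R.4.4}), we get $w(\widetilde{\mathbf{G}}) = w(\widetilde{X})$. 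Substituting $\|\mathbf{G}\| = \|X\|$ and $w(\widetilde{\mathbf{G}}) = w(\widetilde{X})$ into Theorem \ref{T.0001} yields \eqref{C.0002.I.001} at once.

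For the second inequality I would put $\nu = \frac{1}{2}$, so that $1 - 2\nu = 0$ and $|1-2\nu| = 0$; then the left-hand side of \eqref{C.0002.I.001} is $w(X)$ (equivalently, $\Delta_{_{(2,\frac{1}{2})}}(X) = w(X)$ by Remark \ref{R.4.2}) and the factor $\frac{1+|1-2\nu|}{2}$ becomes $\frac{1}{2}$, giving \eqref{C.0002.I.002}. I do not anticipate a genuine obstacle here; the only step meriting a moment's care is the identification of the Aluthge transform of the block-diagonal operator $\mathbf{G}$ with the block-diagonal operator whose $(2,2)$-entry is $-\widetilde{X}$, and this is settled by the explicit polar decomposition of $\mathbf{G}$ recorded above together with the fact that the Aluthge transform of $-X$ is $-\widetilde{X}$.
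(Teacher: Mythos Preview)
Your proposal is correct and follows exactly the paper's route: the paper's own proof simply says that \eqref{C.0002.I.001} follows from Theorem~\ref{T.0001} by taking $\rho=2$ together with \eqref{R.4.4.I.1}, and that \eqref{C.0002.I.002} follows from \eqref{C.0002.I.001} by taking $\nu=\tfrac{1}{2}$. You have merely made explicit the computations the paper leaves to the reader, namely that for $\rho=2$ the block $\mathbf{G}$ reduces to $\begin{bmatrix}0&0\\0&-X\end{bmatrix}$, so that $\|\mathbf{G}\|=\|X\|$ and $\widetilde{\mathbf{G}}=\begin{bmatrix}0&0\\0&-\widetilde{X}\end{bmatrix}$, whence $w(\widetilde{\mathbf{G}})=w(\widetilde{X})$.
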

\begin{proof}
The inequality \eqref{C.0002.I.001} follows from Theorem \ref{T.0001} by letting $\rho= 2$ and \eqref{R.4.4.I.1}.
The inequality \eqref{C.0002.I.002} also follows from \eqref{C.0002.I.001} by letting $\nu=\frac{1}{2}$.
\end{proof}
\begin{remark}\label{R.00021}
The inequality \eqref{C.0002.I.002} in Corollary \ref{C.0002} is due to Yamazaki \cite[Theorem~2.1]{Yamazaki.2007}.
\end{remark}
As another consequence of Theorem \ref{T.0001}, we have the following result.
\begin{corollary}\label{C.000209}
Let $X\in \mathbb{B}(\mathcal{H})$. Then
\begin{align*}
\Delta_{_{(\rho,\nu)}}(X)
\leq \frac{1+|1-2\nu|}{2}\left(\left\|\begin{bmatrix}
0 & \alpha X \\
0& \beta X
\end{bmatrix}\right\|+\sqrt{|\beta|}\,{\left\|{\begin{bmatrix}
0 & \alpha X^2 \\
0& \beta X^2
\end{bmatrix}}\right\|}^{\frac{1}{2}}\right).
\end{align*}
\end{corollary}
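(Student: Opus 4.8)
The plan is to obtain the bound from Theorem~\ref{T.0001} by controlling the Aluthge‑transform term occurring there. Write $\mathbf{G} = \begin{bmatrix} 0 & \alpha X \\ 0 & \beta X \end{bmatrix}$, so that Theorem~\ref{T.0001} reads $\Delta_{_{(\rho,\nu)}}(X) \leq \tfrac{1+|1-2\nu|}{2}\big(\|\mathbf{G}\| + w(\widetilde{\mathbf{G}})\big)$. Multiplying out the blocks gives
\begin{align*}
\mathbf{G}^{2} = \begin{bmatrix} 0 & \alpha\beta X^{2} \\ 0 & \beta^{2} X^{2}\end{bmatrix} = \beta\begin{bmatrix} 0 & \alpha X^{2} \\ 0 & \beta X^{2}\end{bmatrix},
\end{align*}
so $\|\mathbf{G}^{2}\|^{1/2} = \sqrt{|\beta|}\,\left\|\begin{bmatrix} 0 & \alpha X^{2} \\ 0 & \beta X^{2}\end{bmatrix}\right\|^{1/2}$. (If $\beta = 0$, i.e.\ $\rho = 1$, this quantity vanishes and $\widetilde{\mathbf{G}} = 0$, so the corollary reduces to Theorem~\ref{T.0001}; hence we may assume $\beta \neq 0$.) It therefore suffices to prove $w(\widetilde{\mathbf{G}}) \leq \|\mathbf{G}^{2}\|^{1/2}$, which I would deduce from the general inequality $\|\widetilde{T}\|^{2} \leq \|T^{2}\|$ for $T \in \mathbb{B}(\mathcal{H})$, together with $w(\cdot) \leq \|\cdot\|$.

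The inequality $\|\widetilde{T}\|^{2} \leq \|T^{2}\|$ can be established using only tools already present in the paper. Let $T = V|T|$ be the polar decomposition. From $|T^{*}| = V|T|V^{*}$ one has $\widetilde{T}\widetilde{T}^{*} = |T|^{1/2}V|T|V^{*}|T|^{1/2} = |T|^{1/2}|T^{*}||T|^{1/2} = \big(|T|^{1/2}|T^{*}|^{1/2}\big)\big(|T|^{1/2}|T^{*}|^{1/2}\big)^{*}$, whence
\begin{align*}
\|\widetilde{T}\|^{2} = \big\||T|^{1/2}|T^{*}|^{1/2}\big\|^{2} \leq \big\||T|\,|T^{*}|\big\|,
\end{align*}
the last step being the Heinz inequality applied with $A = I$, $B = |T|$, $C = |T^{*}|$, $s = \tfrac12$. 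Moreover $\big\||T|\,|T^{*}|\big\|^{2} = \big\||T|\,|T^{*}|^{2}\,|T|\big\| = \big\||T|\,T\,T^{*}\,|T|\big\| = \big\|(|T|T)(|T|T)^{*}\big\| = \||T|\,T\|^{2}$, while $T^{2} = V(|T|T)$ and $V$ is isometric on $\overline{{\rm ran}}\,|T| \supseteq {\rm ran}(|T|T)$, so $\||T|\,T\| = \|T^{2}\|$. Combining the last three facts gives $\|\widetilde{T}\|^{2} \leq \|T^{2}\|$. Applying this with $T = \mathbf{G}$ yields $w(\widetilde{\mathbf{G}}) \leq \|\widetilde{\mathbf{G}}\| \leq \|\mathbf{G}^{2}\|^{1/2} = \sqrt{|\beta|}\,\left\|\begin{bmatrix} 0 & \alpha X^{2} \\ 0 & \beta X^{2}\end{bmatrix}\right\|^{1/2}$, and substituting into Theorem~\ref{T.0001} finishes the proof.

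The substance of the argument is thus the single inequality $\|\widetilde{T}\|^{2} \leq \|T^{2}\|$ (equivalently $w(\widetilde{T}) \leq \|T^{2}\|^{1/2}$); everything else is routine block bookkeeping, and the only place I expect to need care is the passage $\|\widetilde{T}\|^{2} = \big\||T|^{1/2}|T^{*}||T|^{1/2}\big\| \to \|T^{2}\|$, i.e.\ the interplay of the Heinz inequality with the polar‑decomposition identities listed in the introduction. As a check on the statement, taking $\rho = 2$ and $\nu = \tfrac12$ collapses the corollary to Kittaneh's inequality $w(X) \leq \tfrac12\big(\|X\| + \|X^{2}\|^{1/2}\big)$, one of the results the paper sets out to extend. (A cosmetic alternative to the final step is to compute $\widetilde{\mathbf{G}}$ outright: since $\alpha^{2}+\beta^{2} = 4\rho^{-2}$ one gets $|\mathbf{G}| = \tfrac{2}{\rho}\left[\begin{smallmatrix} 0 & 0 \\ 0 & |X| \end{smallmatrix}\right]$ and then $\widetilde{\mathbf{G}} = \beta\left[\begin{smallmatrix} 0 & 0 \\ 0 & \widetilde{X}\end{smallmatrix}\right]$, so $w(\widetilde{\mathbf{G}}) = |\beta|\,w(\widetilde{X})$ and the claim reduces to $|1-\rho|\,w(\widetilde{X})^{2} \leq \|X^{2}\|$, which again follows from $w(\widetilde{X})^{2} \leq \|X^{2}\|$ since $|1-\rho|\leq 1$ on $(0,2]$.)
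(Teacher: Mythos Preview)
Your proposal is correct and follows essentially the same strategy as the paper: start from Theorem~\ref{T.0001}, compute $\mathbf{G}^{2}=\beta\left[\begin{smallmatrix}0&\alpha X^{2}\\0&\beta X^{2}\end{smallmatrix}\right]$, and bound $w(\widetilde{\mathbf{G}})\le\|\widetilde{\mathbf{G}}\|\le\|\mathbf{G}^{2}\|^{1/2}$ via the Heinz inequality and polar-decomposition identities. The only (cosmetic) difference is that the paper applies Heinz directly with $A=\mathbf{V}$, $B=C=|\mathbf{G}|$, $s=\tfrac12$ to get $\||\mathbf{G}|^{1/2}\mathbf{V}|\mathbf{G}|^{1/2}\|\le\||\mathbf{G}|\mathbf{V}|\mathbf{G}|\|^{1/2}\|\mathbf{V}\|^{1/2}=\|\mathbf{G}^{2}\|^{1/2}$, whereas you reach the same bound through the slightly longer (but equally valid) detour via $|T^{*}|$.
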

\begin{proof}
Let $\mathbf{G} = \mathbf{V}|\mathbf{G}|$ be the polar decomposition of $\mathbf{G}=\begin{bmatrix}
0 & \alpha X \\
0& \beta X
\end{bmatrix}$. It is easy to check that ${\mathbf{G}}^2=\beta \begin{bmatrix}
0 & \alpha X^2 \\
0& \beta X^2
\end{bmatrix}$.
By the Heinz inequality it follows that
\begin{align*}
w\big(\widetilde{\mathbf{G}}\big)\leq \|\widetilde{\mathbf{G}}\|
= \left\|{|\mathbf{G}|}^{\frac{1}{2}}\mathbf{V}{|\mathbf{G}|}^{\frac{1}{2}}\right\|
\leq {\left\||\mathbf{G}|\mathbf{V}|\mathbf{G}|\right\|}^{\frac{1}{2}}{\left\|\mathbf{V}\right\|}^{\frac{1}{2}},
={\left\|{\mathbf{G}}^2\right\|}^{\frac{1}{2}}
\end{align*}
and so by Theorem \ref{T.0001} we obtain
\begin{align*}
\Delta_{_{(\rho,\nu)}}(X)
\leq \frac{1+|1-2\nu|}{2}\Big(\big\|\mathbf{G}\big\|+{\left\|{\mathbf{G}}^2\right\|}^{\frac{1}{2}}\Big),
\end{align*}
as required.
\end{proof}
\begin{remark}\label{R.000209}
Kittaneh \cite[Theorem~1]{Kittaneh.2003} proved that $w(X)\leq \frac{1}{2}\left(\|X\|+{\|X^2\|}^{\frac{1}{2}}\right)$,
which easily follows from Corollary \ref{C.000209} by letting $\rho= 2$, $\nu=\frac{1}{2}$ and \eqref{R.4.4.I.1}.
\end{remark}
In the following theorem we state another upper bound for $\Delta_{_{(\rho,\nu)}}(\cdot)$.
\begin{theorem}\label{T.000263}
Let $X\in \mathbb{B}(\mathcal{H})$ and $\lambda\in \mathbb{C}\setminus\{0\}$. Then
\begin{align*}
\Delta^2_{_{(\rho,\nu)}}(X)
&\leq (1-2\nu)^2w^2_{\rho}(X)
+ 2\left|\frac{(1-2\nu)\beta}{\lambda}\right|w_{\rho}(X^2)
\\& \qquad +\frac{2|1-2\nu|\max\big\{1, |\lambda - 1|\big\}+|\lambda|}{2|\lambda|}\,
\left\|\begin{bmatrix}
\mathbf{A} & \mathbf{B}\\
\mathbf{B}& \mathbf{C}
\end{bmatrix}\right\|,
\end{align*}
where $\mathbf{A}={\alpha}^2|X^*|^2$, $\mathbf{B}=\alpha\beta|X^*|^2$ and
$\mathbf{C}=({\alpha}^2+{\beta}^2)|X|^2+{\beta}^2|X^*|^2$.
\end{theorem}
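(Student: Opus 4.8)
The plan is to work with the self-adjoint operator $\mathrm{Re}(e^{i\theta}\mathbf{H})$ on $\mathcal{H}\oplus\mathcal{H}$, where $\mathbf{H}$ is the defining matrix of $\Delta_{_{(\rho,\nu)}}(X)$, and to estimate $\langle \mathrm{Re}(e^{i\theta}\mathbf{H})z,z\rangle$ for a unit vector $z$, exactly as in the proof of Theorem~\ref{T.0001}. Write $\mathbf{H} = \mathbf{G} + (1-2\nu)\mathbf{G}^*$ with $\mathbf{G}=\begin{bmatrix}0 & \alpha X\\ 0 & \beta X\end{bmatrix}$; then $\mathrm{Re}(e^{i\theta}\mathbf{H}) = \mathrm{Re}(e^{i\theta}\mathbf{G}) + (1-2\nu)\mathrm{Re}(e^{-i\theta}\mathbf{G}^*)^{*}$, but a cleaner bookkeeping is $\mathrm{Re}(e^{i\theta}\mathbf{H}) = \big(\tfrac12 + \tfrac{1-2\nu}{2}\big)(e^{i\theta}\mathbf{G}+\text{adj}) + \text{cross terms}$; more usefully, since $(1-2\nu)\mathbf{G}^*$ appears, one groups $\langle\mathrm{Re}(e^{i\theta}\mathbf{H})z,z\rangle = \mathrm{Re}\langle e^{i\theta}\mathbf{G}z,z\rangle + (1-2\nu)\mathrm{Re}\langle e^{i\theta}\mathbf{G}^*z,z\rangle$ and treats the second term as $(1-2\nu)\mathrm{Re}\overline{\langle e^{-i\theta}\mathbf{G}z,z\rangle}$. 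The novelty over Theorem~\ref{T.0001} is that instead of bounding $\mathrm{Re}\langle e^{i\theta}\mathbf{G}z,z\rangle$ by a $|\mathbf{G}|^{1/2}\cdots|\mathbf{G}|^{1/2}$ sandwich, I will use a scaling parameter $\lambda$ in the polarization identity so that the ``diagonal'' piece of the resulting $2\times2$ block operator carries the factors $\alpha^2|X^*|^2$, $\alpha\beta|X^*|^2$ and $(\alpha^2+\beta^2)|X|^2+\beta^2|X^*|^2$ recorded as $\mathbf{A},\mathbf{B},\mathbf{C}$.

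Concretely: for each vector $u$ and each nonzero $\lambda$, the elementary identity
\[
\mathrm{Re}\langle \mathbf{G}u,u\rangle
= \frac{1}{2|\lambda|}\Big(\big\langle (\text{something})u,u\big\rangle - \big\langle(\text{something else})u,u\big\rangle\Big)
\]
obtained from $\|\lambda^{1/2}u \pm \lambda^{-1/2}\mathbf{G}u\|^2$-type expansions isolates $\mathbf{G}^*\mathbf{G}$ (which equals $\beta^2|X|^2$ in the lower-right and nothing else, since $\mathbf{G}^*\mathbf{G} = \begin{bmatrix}0&0\\0&(\alpha^2+\beta^2)|X|^2\end{bmatrix}$ — wait, one recomputes: $\mathbf{G}^*\mathbf{G}=\begin{bmatrix}0&0\\ \overline{\alpha}X^* & \overline{\beta}X^*\end{bmatrix}\begin{bmatrix}0&\alpha X\\0&\beta X\end{bmatrix}=\begin{bmatrix}0&0\\0&(\alpha^2+\beta^2)|X|^2\end{bmatrix}$ while $\mathbf{G}\mathbf{G}^*=\begin{bmatrix}\alpha^2|XX^*|&\alpha\beta XX^*\\ \alpha\beta XX^*&\beta^2XX^*\end{bmatrix}$, i.e. the block matrix with entries $\mathbf{A},\mathbf{B},\mathbf{B},\beta^2|X^*|^2$). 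The block matrix $\begin{bmatrix}\mathbf{A}&\mathbf{B}\\\mathbf{B}&\mathbf{C}\end{bmatrix}$ in the statement is therefore $\mathbf{G}\mathbf{G}^* + \mathbf{G}^*\mathbf{G}$ up to the exact distribution of the $|X|^2$ terms, so the key algebraic step is to verify $\mathbf{G}\mathbf{G}^*+\mathbf{G}^*\mathbf{G} = \begin{bmatrix}\mathbf{A}&\mathbf{B}\\\mathbf{B}&\mathbf{C}\end{bmatrix}$ and to recognize the off-diagonal $\widetilde{\mathbf{G}}$-type piece as governed by $\mathbf{G}^2 = \beta\begin{bmatrix}0&\alpha X^2\\0&\beta X^2\end{bmatrix}$ (as already computed in Corollary~\ref{C.000209}), whose norm is $w_\rho$ of a related matrix — more precisely $\|\mathbf{G}^2\| = |\beta|\,\big\|[\begin{smallmatrix}0&\alpha X^2\\0&\beta X^2\end{smallmatrix}]\big\|$ and by Remark~\ref{R.4.2} the latter relates to $w_\rho(X^2)$.

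The steps in order: (1) fix $\theta\in\mathbb{R}$ and a unit vector $z$, and split $\langle\mathrm{Re}(e^{i\theta}\mathbf{H})z,z\rangle$ into a $\mathbf{G}$-term and a $(1-2\nu)$-weighted $\mathbf{G}^*$-term as above; (2) to the pure $(1-2\nu)^2$-free part, apply the $\lambda$-scaled polarization so that the $\theta$-independent part produces $\tfrac{1}{2|\lambda|}(\text{entries of } \mathbf{G}\mathbf{G}^*+\mathbf{G}^*\mathbf{G})$ contributing the third summand, while the $\theta$-dependent cross term is $\tfrac{1}{|\lambda|}\mathrm{Re}(e^{2i\theta}\,\text{const}\cdot\mathbf{G}^2)$-type, whose norm-supremum over $\theta$ is $\le \tfrac{2|\beta|}{|\lambda|}\,w(\text{matrix with }X^2) = \tfrac{2|\beta|}{|\lambda|}\cdot\frac{\rho}{2}w_\rho(X^2)$ by Remark~\ref{R.4.2}/\eqref{I.13.1}, i.e. $\tfrac{2|(1-2\nu)\beta|}{|\lambda|}w_\rho(X^2)$ after tracking the $(1-2\nu)$ weights correctly; (3) bound the genuinely quadratic-in-$(1-2\nu)$ leftover by $(1-2\nu)^2 w(\mathbf{G}) = (1-2\nu)^2 w_\rho(X)$ using Remark~\ref{R.4.2} again with $\nu=\tfrac12$ and \eqref{I.13.1}, since $w\big(\begin{bmatrix}0&\alpha X\\0&\beta X\end{bmatrix}\big)=\Delta_{_{(\rho,\frac12)}}(X)=w_\rho(X)$; (4) collect the constants into $\tfrac{2|1-2\nu|\max\{1,|\lambda-1|\}+|\lambda|}{2|\lambda|}$ by tracking where $|e^{i\theta}I \pm \lambda^{-1}\mathbf{G}|^2$ expansions put a $|\lambda-1|$ versus a $1$; (5) take $\sup_{\|z\|=1}$ then $\sup_\theta$, invoke \eqref{I.12.3}. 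The main obstacle is step (4): keeping the coefficient bookkeeping honest so that the mixed polarization of $\mathrm{Re}\langle e^{i\theta}\mathbf{G}z,z\rangle$ and $\mathrm{Re}\langle e^{-i\theta}\mathbf{G}z,z\rangle$ with the single shared parameter $\lambda$ yields precisely $\max\{1,|\lambda-1|\}$ and not a sum of two such maxima — this requires choosing the polarization vectors as $e^{\pm i\theta}z$ and $\lambda^{-1}\mathbf{V}^*z$ (with $\mathbf{G}=\mathbf{V}|\mathbf{G}|$) and using $|a-b|\le\max\{a,b\}$ for nonnegative $a,b$ exactly as in Theorem~\ref{T.0001}, but now the ``$1$'' becomes ``$|\lambda-1|$'' in the cross term and ``$1$'' in the coefficient of $\mathbf{G}\mathbf{G}^*+\mathbf{G}^*\mathbf{G}$. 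Everything else is a routine adaptation of the computation already displayed in Theorem~\ref{T.0001}.
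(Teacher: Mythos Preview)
Your proposal has a genuine structural gap: you are trying to port the Theorem~\ref{T.0001} template (real part, polar decomposition $\mathbf{G}=\mathbf{V}|\mathbf{G}|$, polarization with $e^{\pm i\theta}z$ and $\mathbf{V}^*z$) to a statement that is \emph{quadratic} in $\Delta_{(\rho,\nu)}$. The terms $|\mathbf{G}|^2+|\mathbf{G}^*|^2=\begin{bmatrix}\mathbf{A}&\mathbf{B}\\\mathbf{B}&\mathbf{C}\end{bmatrix}$ and $\mathbf{G}^2$ only appear after you expand $|\langle\mathbf{H}z,z\rangle|^2$, not $\langle\mathrm{Re}(e^{i\theta}\mathbf{H})z,z\rangle$; your outline never squares anything, and the slip in step~(3) (writing $(1-2\nu)^2 w(\mathbf{G})$ rather than $(1-2\nu)^2 w^2(\mathbf{G})$) is a symptom of this mismatch. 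The paper works directly with
\[
|\langle\mathbf{H}z,z\rangle|^2\le\bigl(|\langle\mathbf{G}z,z\rangle|+|1-2\nu|\,|\langle\mathbf{G}^*z,z\rangle|\bigr)^2,
\]
then expands the square and handles the three pieces separately.

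More importantly, the parameter $\lambda$ and the coefficient $\max\{1,|\lambda-1|\}$ do \emph{not} come from any ``$\lambda$-scaled polarization identity''; they come from the extended Buzano inequality (Lemma~2.2 of \cite{K.Z.JCAM.2023}),
\[
|\langle u,e\rangle\langle e,v\rangle|\le\frac{1}{|\lambda|}\Bigl(\max\{1,|\lambda-1|\}\,\|u\|\,\|v\|+|\langle u,v\rangle|\Bigr),
\]
applied with $u=\mathbf{G}z$, $v=\mathbf{G}^*z$, $e=z$ to the cross term $2|1-2\nu|\,|\langle\mathbf{G}z,z\rangle\langle z,\mathbf{G}^*z\rangle|$. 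This is the missing key lemma; the $\max\{1,|\lambda-1|\}$ is the operator norm of $I-\lambda P_e$ and has nothing to do with the $|a-b|\le\max\{a,b\}$ trick from Theorem~\ref{T.0001}. The remaining square term $|\langle\mathbf{G}z,z\rangle|^2$ is controlled by the mixed Schwarz inequality $|\langle\mathbf{G}z,z\rangle|^2\le\langle|\mathbf{G}|z,z\rangle\langle|\mathbf{G}^*|z,z\rangle$ followed by AM--GM and $\langle Pz,z\rangle^2\le\langle P^2z,z\rangle$. No polar decomposition of $\mathbf{G}$ is used anywhere.
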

\begin{proof}
Again we will assume that $\mathbf{H} = \begin{bmatrix}
0 & \alpha X\\
\alpha(1-2\nu)X^*& \beta\left(X+(1-2\nu)X^*\right)
\end{bmatrix}$ and $\mathbf{G}=\begin{bmatrix}
0 & \alpha X \\
0& \beta X
\end{bmatrix}$, to simplify notations.
Then simple computation shows that
\begin{align}\label{T.000263.I.0}
{\mathbf{G}}^2=\beta\begin{bmatrix}
0 & \alpha X^2 \\
0& \beta X^2
\end{bmatrix} \quad \mbox{and} \quad
|\mathbf{G}|^2+|{\mathbf{G}}^*|^2
=\begin{bmatrix}
\mathbf{A} & \mathbf{B}\\
\mathbf{B}& \mathbf{C}
\end{bmatrix}.
\end{align}
Let us recall the following extension of the Buzano inequality \cite[Lemma~2.2]{K.Z.JCAM.2023}:
\begin{align}\label{T.000263.I.1}
\big|\langle u, e\rangle\langle e, v\rangle\big|
\leq \frac{1}{|\lambda|}\Big(\max\big\{1, |\lambda - 1|\big\}\|u\|\|v\| + |\langle u, v\rangle|\Big)
\end{align}
for vectors $u, v, e \in \mathcal{H}\oplus\mathcal{H}$ with $\|e\| = 1$ and $\lambda\in \mathbb{C}\setminus\{0\}$.
Assume now that $z\in \mathcal{H}\oplus\mathcal{H}$ with $\|z\|=1$.
Since ${\left|\langle \mathbf{G}z, z\rangle\right|}^2\leq \left\langle|\mathbf{G}|z, z\right\rangle\left\langle|{\mathbf{G}}^*|z, z\right\rangle$ (see, e.g., \cite[pp.~75--76]{Halmos.1982}), by \eqref{T.000263.I.1} and the
arithmetic-geometric mean inequality, we have
\begingroup\makeatletter\def\f@size{10}\check@mathfonts
\begin{align}\label{T.000263.I.2}
&{\left|\left\langle\mathbf{H}z, z\right\rangle\right|}^2
= {\left|{\left\langle \mathbf{G}z
+(1-2\nu){\mathbf{G}}^*z, z\right\rangle}\right|}^2\nonumber
\\& \leq \left(\left|\langle \mathbf{G}z, z\rangle\right|+|1-2\nu|\left|\langle{\mathbf{G}}^*z, z\rangle\right|\right)^2\nonumber
\\& = {\left|\langle \mathbf{G}z, z\rangle\right|}^2+(1-2\nu)^2{\left|\langle{\mathbf{G}}^*z, z\rangle\right|}^2
+ 2|1-2\nu|\left|\langle \mathbf{G}z, z\rangle\langle z, {\mathbf{G}}^*z\rangle\right|\nonumber
\\& \leq \left\langle|\mathbf{G}|z, z\right\rangle\left\langle|{\mathbf{G}}^*|z, z\right\rangle
+(1-2\nu)^2{\left|\langle z, \mathbf{G}z\rangle\right|}^2\nonumber
\\& \qquad + \frac{2|1-2\nu|}{|\lambda|}\Big(\max\big\{1, |\lambda - 1|\big\}\|\mathbf{G}z\|\|{\mathbf{G}}^*z\|+\left|\langle\mathbf{G}z, {\mathbf{G}}^*z\rangle\right|\Big)\nonumber
\\&\leq \frac{{\left\langle|\mathbf{G}|z, z\right\rangle}^2+{\left\langle|{\mathbf{G}}^*|z, z\right\rangle}^2}{2}
+ (1-2\nu)^2{\left|\langle\mathbf{G}z, z\rangle\right|}^2\nonumber
\\& \qquad +\frac{|1-2\nu|\max\big\{1, |\lambda - 1|\big\}}{|\lambda|}\big({\|\mathbf{G}z\|}^2+{\|{\mathbf{G}}^*z\|}^2\big)
+ \frac{2|1-2\nu|}{|\lambda|}\left|\langle{\mathbf{G}}^2z, z\rangle\right|\nonumber
\\&= \frac{{\left\langle|\mathbf{G}|z, z\right\rangle}^2+{\left\langle|{\mathbf{G}}^*|z, z\right\rangle}^2}{2}
+ (1-2\nu)^2{\left|\langle\mathbf{G}z, z\rangle\right|}^2\nonumber
\\& \qquad +\frac{|1-2\nu|\max\big\{1, |\lambda - 1|\big\}}{|\lambda|}\big(\left\langle|\mathbf{G}|^2z, z\right\rangle+\left\langle|{\mathbf{G}}^*|^2z, z\right\rangle\big)+ \frac{2|1-2\nu|}{|\lambda|}\left|\langle{\mathbf{G}}^2z, z\rangle\right|.
\end{align}
\endgroup
Further, since ${\langle Px, x\rangle}^2\leq \langle P^2x, x\rangle$ for any positive operator $P$ and
any unit vector $x$ (see, e.g., \cite{Kittaneh.1988}), by \eqref{T.000263.I.2} and \eqref{T.000263.I.0} it follows that
\begingroup\makeatletter\def\f@size{10}\check@mathfonts
\begin{align*}
{\left|\left\langle\mathbf{H}z, z\right\rangle\right|}^2
&\leq \frac{\left\langle|\mathbf{G}|^2z, z\right\rangle+\left\langle|{\mathbf{G}}^*|^2z, z\right\rangle}{2}
+ (1-2\nu)^2{\left|\langle\mathbf{G}z, z\rangle\right|}^2
\\& \qquad +\frac{|1-2\nu|\max\big\{1, |\lambda - 1|\big\}}{|\lambda|}\big(\left\langle|\mathbf{G}|^2z, z\right\rangle+\left\langle|{\mathbf{G}}^*|^2z, z\right\rangle\big)
+ \frac{2|1-2\nu|}{|\lambda|}\left|\langle{\mathbf{G}}^2z, z\rangle\right|
\\&= (1-2\nu)^2{\left|\langle\mathbf{G}z, z\rangle\right|}^2+ \frac{2|1-2\nu|}{|\lambda|}\left|\langle{\mathbf{G}}^2z, z\rangle\right|
\\& \qquad +\frac{2|1-2\nu|\max\big\{1, |\lambda - 1|\big\}+|\lambda|}{2|\lambda|}\big(\left\langle\left(|\mathbf{G}|^2+|{\mathbf{G}}^*|^2\right)z, z\right\rangle\big)
\\&\leq (1-2\nu)^2w^2(\mathbf{G})
+ \frac{2|1-2\nu|}{|\lambda|}w({\mathbf{G}}^2)
\\& \qquad +\frac{2|1-2\nu|\max\big\{1, |\lambda - 1|\big\}+|\lambda|}{2|\lambda|}\,
\left\||\mathbf{G}|^2+|{\mathbf{G}}^*|^2\right\|
\\&=(1-2\nu)^2w^2_{\rho}(X)
+ \frac{2|1-2\nu|}{|\lambda|}|\beta|w_{\rho}(X^2)
\\& \qquad +\frac{2|1-2\nu|\max\big\{1, |\lambda - 1|\big\}+|\lambda|}{2|\lambda|}\,
\left\|\begin{bmatrix}
\mathbf{A} & \mathbf{B}\\
\mathbf{B}& \mathbf{C}
\end{bmatrix}\right\|,
\end{align*}
\endgroup
and hence
\begingroup\makeatletter\def\f@size{10}\check@mathfonts
\begin{align*}
{\left|\left\langle\mathbf{H}z, z\right\rangle\right|}^2
&\leq (1-2\nu)^2w^2_{\rho}(X)
+ 2\left|\frac{(1-2\nu)\beta}{\lambda}\right|w_{\rho}(X^2)
\\& \qquad +\frac{2|1-2\nu|\max\big\{1, |\lambda - 1|\big\}+|\lambda|}{2|\lambda|}\,
\left\|\begin{bmatrix}
\mathbf{A} & \mathbf{B}\\
\mathbf{B}& \mathbf{C}
\end{bmatrix}\right\|.
\end{align*}
\endgroup
Taking the supremum over unit vectors $z\in \mathcal{H}\oplus\mathcal{H}$ in the above inequality, we deduce the desired result.
\end{proof}
Consequences of Theorem \ref{T.000263} can be stated as follows.
\begin{corollary}\label{C.0002.1}
Let $X\in \mathbb{B}(\mathcal{H})$. Then
\begingroup\makeatletter\def\f@size{10}\check@mathfonts
\begin{align*}
\Delta^2_{_{(\rho,\nu)}}(X)
\leq (1-2\nu)^2w^2_{\rho}(X)
+ \left|(1-2\nu)\beta\right|w_{\rho}(X^2)
+\frac{|1-2\nu|+1}{2}\,
\left\|\begin{bmatrix}
\mathbf{A} & \mathbf{B}\\
\mathbf{B}& \mathbf{C}
\end{bmatrix}\right\|,
\end{align*}
\endgroup
where $\mathbf{A}, \mathbf{B}$ and $\mathbf{C}$ are the same as in Theorem \ref{T.000263}.
\end{corollary}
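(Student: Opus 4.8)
The plan is to obtain this inequality directly from Theorem \ref{T.000263} by specializing the free parameter $\lambda$. The bound in Theorem \ref{T.000263} holds for every $\lambda\in\mathbb{C}\setminus\{0\}$, and the precise shape of the constants occurring in the corollary — namely $|(1-2\nu)\beta|$ multiplying $w_\rho(X^2)$ and $\frac{|1-2\nu|+1}{2}$ multiplying the block-matrix norm — points to the choice $\lambda=2$. So the first step is simply to write down Theorem \ref{T.000263} with $\lambda=2$.

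Next I would carry out the elementary simplification of the three terms. The first term $(1-2\nu)^2 w_\rho^2(X)$ is independent of $\lambda$ and is unchanged. In the second term one has $2\bigl|\tfrac{(1-2\nu)\beta}{\lambda}\bigr| = 2\cdot\tfrac{|(1-2\nu)\beta|}{2} = |(1-2\nu)\beta|$. In the third term $\max\{1,|\lambda-1|\} = \max\{1,1\} = 1$, so the coefficient becomes $\frac{2|1-2\nu|\cdot 1 + 2}{2\cdot 2} = \frac{|1-2\nu|+1}{2}$. The operators $\mathbf{A},\mathbf{B},\mathbf{C}$ are exactly those from Theorem \ref{T.000263}, so collecting the three simplified terms yields precisely the asserted inequality.

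There is essentially no obstacle here: the corollary is a one-line consequence of Theorem \ref{T.000263}, and the only thing to verify is the routine arithmetic confirming that $\lambda=2$ reproduces the stated constants. If desired, one could add a remark that $\lambda=2$ is a natural balancing value — it is the largest $\lambda$ for which $\max\{1,|\lambda-1|\}$ still equals $1$ — but for the proof itself nothing beyond the substitution $\lambda=2$ and simplification is needed.
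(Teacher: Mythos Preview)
Your proposal is correct and matches the paper's own proof exactly: the paper simply states that the corollary follows from Theorem~\ref{T.000263} by letting $\lambda=2$. Your arithmetic verifying the constants is right, and nothing further is needed.
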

\begin{proof}
The proof follows from Theorem \ref{T.000263} by letting $\lambda=2$.
\end{proof}
\begin{corollary}\label{C.0002.3}
Let $X\in \mathbb{B}(\mathcal{H})$. Then
\begingroup\makeatletter\def\f@size{10}\check@mathfonts
\begin{align}\label{C.0002.3.I.1}
w^2\left(X+(1-2\nu)X^*\right)\leq (1-2\nu)^2w^2(X)+ |1-2\nu|w(X^2)
+\frac{|1-2\nu|+1}{2}\left\||X|^2+|X^*|^2\right\|.
\end{align}
\endgroup
In particular,
\begin{align}\label{C.0002.3.I.2}
w^2(X)\leq \frac{1}{2}\left\||X|^2+|X^*|^2\right\|.
\end{align}
\end{corollary}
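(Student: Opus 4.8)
The plan is to derive both inequalities by specializing Corollary~\ref{C.0002.1} (which is Theorem~\ref{T.000263} with $\lambda = 2$) to the endpoint $\rho = 2$. First I would record the parameter values at $\rho = 2$: since $\alpha = \sqrt{8\cdot 2^{-1} - 4}$ and $\beta = 2\cdot 2^{-1} - 2$, we get $\alpha = 0$ and $\beta = -1$. Feeding these into the definitions of $\mathbf{A}$, $\mathbf{B}$, $\mathbf{C}$ from Theorem~\ref{T.000263} gives $\mathbf{A} = {\alpha}^2|X^*|^2 = 0$, $\mathbf{B} = \alpha\beta|X^*|^2 = 0$ and $\mathbf{C} = ({\alpha}^2 + {\beta}^2)|X|^2 + {\beta}^2|X^*|^2 = |X|^2 + |X^*|^2$, so that
\begin{align*}
\left\|\begin{bmatrix}\mathbf{A} & \mathbf{B}\\ \mathbf{B} & \mathbf{C}\end{bmatrix}\right\| = \left\||X|^2 + |X^*|^2\right\|.
\end{align*}

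Next I would translate the remaining quantities in Corollary~\ref{C.0002.1} to the case $\rho = 2$. By \eqref{R.4.4.I.1}, $\Delta_{_{(2,\nu)}}(X) = w\big(X + (1-2\nu)X^*\big)$; since $w_2(\cdot) = w(\cdot)$, we have $w_{\rho}(X) = w(X)$ and $w_{\rho}(X^2) = w(X^2)$; and $|\beta| = 1$, so $\left|(1-2\nu)\beta\right| = |1-2\nu|$. Substituting these together with the block-matrix norm computed above into the inequality of Corollary~\ref{C.0002.1} reproduces \eqref{C.0002.3.I.1} verbatim.

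For the particular case \eqref{C.0002.3.I.2}, I would simply put $\nu = \tfrac{1}{2}$ in \eqref{C.0002.3.I.1}: then $1 - 2\nu = 0$, which kills the first two terms on the right-hand side and reduces the left-hand side to $w^2(X)$, leaving $w^2(X) \le \tfrac{1}{2}\left\||X|^2 + |X^*|^2\right\|$.

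I do not anticipate a real obstacle here, since everything is a direct specialization. The only point requiring a moment's care is the degeneration at $\rho = 2$: there $\alpha = 0$, so the operator $\mathbf{G} = \begin{bmatrix} 0 & \alpha X\\ 0 & \beta X\end{bmatrix}$ used in the proof of Theorem~\ref{T.000263} collapses to $\begin{bmatrix} 0 & 0\\ 0 & -X\end{bmatrix}$. One should check that this causes no trouble, which it does not: both Theorem~\ref{T.000263} and Corollary~\ref{C.0002.1} are stated for all $\rho \in (0,2]$, and no step in their proofs divides by $\alpha$ or $\beta$, so the endpoint $\rho = 2$ is fully legitimate.
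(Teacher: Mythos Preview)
Your proposal is correct and follows essentially the same route as the paper: specialize Corollary~\ref{C.0002.1} to $\rho=2$, use \eqref{R.4.4.I.1} together with $w_2(\cdot)=w(\cdot)$ to identify the left-hand side and the $w_\rho$-terms, compute $\mathbf{A}=\mathbf{B}=0$ and $\mathbf{C}=|X|^2+|X^*|^2$ to simplify the block norm, and then set $\nu=\tfrac12$ for \eqref{C.0002.3.I.2}. Your additional remarks (the explicit values $\alpha=0$, $\beta=-1$, and the sanity check that no division by $\alpha$ or $\beta$ occurs) are accurate and only make the argument more explicit.
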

\begin{proof}
Let $\mathbf{A}, \mathbf{B}$ and $\mathbf{C}$ be the same as in Theorem \ref{T.000263}.
For $\rho=2$, we have $\mathbf{A}=\mathbf{B}=0$ and $\mathbf{C}=|X|^2+|X^*|^2$. Thus,
\begin{align*}
\left\|\begin{bmatrix}
\mathbf{A} & \mathbf{B}\\
\mathbf{B}& \mathbf{C}
\end{bmatrix}\right\|
= \left\||X|^2+|X^*|^2\right\|.
\end{align*}
Now, by \eqref{R.4.4.I.1} and Corollary \ref{C.0002.1} we deduce the desired result.
The inequality \eqref{C.0002.3.I.2} also follows from \eqref{C.0002.3.I.1} by letting $\nu=\frac{1}{2}$.
\end{proof}
\begin{remark}\label{R.0002121}
The inequality \eqref{C.0002.3.I.2} in Corollary \ref{C.0002.3} is due to Kittaneh \cite[Theorem~1]{Kittaneh.2005}.
\end{remark}
\begin{corollary}\label{C.0002.2}
Let $X\in \mathbb{B}(\mathcal{H})$. Then
\begin{align*}
\Delta^2_{_{(\rho,\nu)}}(X)
\leq (1-2\nu)^2w^2_{\rho}(X)+
\frac{2|1-2\nu|+1}{2}
\left\|\begin{bmatrix}
\mathbf{A} & \mathbf{B}\\
\mathbf{B}& \mathbf{C}
\end{bmatrix}\right\|,
\end{align*}
where $\mathbf{A}, \mathbf{B}$ and $\mathbf{C}$ are the same as in Theorem \ref{T.000263}.
\end{corollary}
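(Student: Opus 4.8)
The plan is to obtain Corollary \ref{C.0002.2} from Theorem \ref{T.000263} by a limiting argument in the free parameter $\lambda$, letting $\lambda \to +\infty$ along the positive real axis (in the same spirit as the specializations $\lambda = 2$ used for Corollary \ref{C.0002.1} and the choice $\rho = 2$ used for Corollary \ref{C.0002.3}).

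First I would recall that Theorem \ref{T.000263} holds for \emph{every} $\lambda \in \mathbb{C}\setminus\{0\}$, so in particular the right-hand side there is a valid upper bound for $\Delta^2_{_{(\rho,\nu)}}(X)$ for each real $\lambda > 2$. For such $\lambda$ we have $|\lambda| = \lambda$ and $\max\{1, |\lambda - 1|\} = \lambda - 1$, so the coefficient of the block-matrix norm simplifies to
\begin{align*}
\frac{2|1-2\nu|(\lambda - 1) + \lambda}{2\lambda} = \frac{2|1-2\nu| + 1}{2} - \frac{|1-2\nu|}{\lambda},
\end{align*}
while the middle term equals $\dfrac{2|1-2\nu|\,|\beta|}{\lambda}\, w_{\rho}(X^2)$. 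Since $X$ is fixed, $w_{\rho}(X^2)$ and $\|\,[\begin{smallmatrix}\mathbf{A}&\mathbf{B}\\\mathbf{B}&\mathbf{C}\end{smallmatrix}]\,\|$ are finite constants, so as $\lambda \to +\infty$ the middle term tends to $0$ and the displayed coefficient tends to $\frac{2|1-2\nu|+1}{2}$.

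Finally, because $\Delta^2_{_{(\rho,\nu)}}(X)$ is bounded above by the right-hand side of Theorem \ref{T.000263} for every $\lambda > 2$, it is also bounded above by the limit of these right-hand sides as $\lambda \to +\infty$ (a pointwise limit of valid upper bounds remains a valid upper bound); equivalently, one may pass to the infimum over $\lambda \in (2,\infty)$. This limit is exactly
\begin{align*}
(1-2\nu)^2 w^2_{\rho}(X) + \frac{2|1-2\nu|+1}{2}\left\|\begin{bmatrix}\mathbf{A}&\mathbf{B}\\\mathbf{B}&\mathbf{C}\end{bmatrix}\right\|,
\end{align*}
which is the claimed inequality. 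There is no genuine obstacle in this argument beyond the (routine) observation that limits of the parametrized family of bounds are still bounds; the only point requiring a moment's care is the evaluation of $\max\{1,|\lambda-1|\}$, which is why I restrict to $\lambda > 2$ from the outset.
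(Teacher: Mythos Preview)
Your argument is correct and is essentially the same as the paper's own proof, which simply states that the result follows from Theorem \ref{T.000263} by letting $\lambda = n$ and $n \to \infty$. You have supplied the explicit computation of the limiting coefficients that the paper leaves implicit, but the approach is identical.
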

\begin{proof}
The proof follows from Theorem \ref{T.000263} by letting $\lambda= n$ and $n\rightarrow\infty$.
\end{proof}
For an operator $X$, its Crawford number $c(X)$ is defined by
$c(X) = \displaystyle{\inf_{{\|z\|=1}}}|\langle Xz, z\rangle|$.
This concept is useful in studying linear operators and has
attracted the attention of many authors in the last few decades
(see, e.g., \cite{Wang.Wu.Gau.LAA.2010} and its references).
\begin{theorem}\label{T.00044}
Let $X\in \mathbb{B}(\mathcal{H})$. Then
\begingroup\makeatletter\def\f@size{10}\check@mathfonts
\begin{align*}
\frac{1}{4}\,\left\|\begin{bmatrix}
\mathbf{M} & \mathbf{N}\\
{\mathbf{N}}^*& \mathbf{P}
\end{bmatrix}\right\|+\frac{1}{2}\,c\left(\begin{bmatrix}
\mathbf{Q} & \mathbf{R}\\
\mathbf{T}& \mathbf{S}
\end{bmatrix}\right) \leq \Delta^2_{_{(\rho,\nu)}}(X)
\leq \frac{1}{4}\,\left\|\begin{bmatrix}
\mathbf{M} & \mathbf{N}\\
{\mathbf{N}}^*& \mathbf{P}
\end{bmatrix}\right\|+\frac{1}{2}\,w\left(\begin{bmatrix}
\mathbf{Q} & \mathbf{R}\\
\mathbf{T}& \mathbf{S}
\end{bmatrix}\right),
\end{align*}
\endgroup
where
\begingroup\makeatletter\def\f@size{10}\check@mathfonts
\begin{align*}
\mathbf{Q}=\alpha^2(1-2\nu)|X^*|^2, \qquad \mathbf{R}=\alpha\beta\left(X^2+(1-2\nu)|X^*|^2\right),
\end{align*}
\endgroup
\begingroup\makeatletter\def\f@size{10}\check@mathfonts
\begin{align*}
\mathbf{T}=\alpha\beta(1-2\nu)\left(|X^*|^2+(1-2\nu)(X^*)^2\right), \qquad \mathbf{S}=\alpha^2(1-2\nu)|X|^2+\beta^2\left(X+(1-2\nu)X^*\right)^2,
\end{align*}
\endgroup
\begingroup\makeatletter\def\f@size{10}\check@mathfonts
\begin{align*}
\mathbf{M}=\alpha^2\left(1+(1-2\nu)^2\right)|X^*|^2, \qquad \mathbf{N}=2\alpha\beta(1-2\nu)X^2 + \alpha\beta\left(1+(1-2\nu)^2\right)|X^*|^2,
\end{align*}
\endgroup
and
\begingroup\makeatletter\def\f@size{10}\check@mathfonts
\begin{align*}
\mathbf{P}=\alpha^2\left(1+(1-2\nu)^2\right)|X|^2+\beta^2
\left(\big|X+(1-2\nu)X^*\big|^2+\big|X^*+(1-2\nu)X\big|^2\right).
\end{align*}
\endgroup
\end{theorem}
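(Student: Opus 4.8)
The plan is to reduce everything to one Cartesian‑type identity for the square of the numerical radius, after which both inequalities become elementary manipulations of a supremum. By Definition~\ref{D.1.2} we have $\Delta_{_{(\rho,\nu)}}(X)=w(\mathbf{H})$, where $\mathbf{H}=\begin{bmatrix} 0 & \alpha X\\ \alpha(1-2\nu)X^*& \beta\left(X+(1-2\nu)X^*\right)\end{bmatrix}$, hence $\Delta^2_{_{(\rho,\nu)}}(X)=w^2(\mathbf{H})$. First I would establish, for every operator $T$ on $\mathcal{H}\oplus\mathcal{H}$, the identity
\begin{align*}
w^2(T)=\sup_{\|z\|=1}\left(\frac14\big\langle\big(|T|^2+|T^*|^2\big)z,z\big\rangle+\frac12\big|\langle T^2z,z\rangle\big|\right).
\end{align*}
Indeed, $\big({\rm Re}(e^{i\theta}T)\big)^2=\frac14\big(|T|^2+|T^*|^2\big)+\frac12{\rm Re}(e^{2i\theta}T^2)$, and since ${\rm Re}(e^{i\theta}T)$ is self‑adjoint its square is positive, so $\big\|{\rm Re}(e^{i\theta}T)\big\|^2=\big\|\big({\rm Re}(e^{i\theta}T)\big)^2\big\|=\sup_{\|z\|=1}\big\langle\big({\rm Re}(e^{i\theta}T)\big)^2z,z\big\rangle$. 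Substituting this, interchanging the suprema over $\theta$ and $z$ (all quantities being bounded above), using $\sup_{\theta\in\mathbb{R}}{\rm Re}(e^{2i\theta}c)=|c|$ for $c\in\mathbb{C}$, and invoking \eqref{I.12.3} yields the identity.

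Applying it with $T=\mathbf{H}$ settles both estimates at once. For the upper bound, splitting the supremum and using that $|\mathbf{H}|^2+|\mathbf{H}^*|^2\ge 0$ gives
\begin{align*}
w^2(\mathbf{H})\le\frac14\big\||\mathbf{H}|^2+|\mathbf{H}^*|^2\big\|+\frac12\sup_{\|z\|=1}\big|\langle\mathbf{H}^2z,z\rangle\big|=\frac14\big\||\mathbf{H}|^2+|\mathbf{H}^*|^2\big\|+\frac12 w(\mathbf{H}^2).
\end{align*}
For the lower bound, since $\big|\langle\mathbf{H}^2z,z\rangle\big|\ge c(\mathbf{H}^2)$ for every unit vector $z$, the identity forces $w^2(\mathbf{H})\ge\frac14\big\||\mathbf{H}|^2+|\mathbf{H}^*|^2\big\|+\frac12 c(\mathbf{H}^2)$, again using $|\mathbf{H}|^2+|\mathbf{H}^*|^2\ge 0$.

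What remains is to compute the two operators explicitly. A direct block multiplication — using that $1-2\nu$ is real and $(X^2)^*=(X^*)^2$, together with $XX^*=|X^*|^2$, $X^*X=|X|^2$, $\big(X^*+(1-2\nu)X\big)\big(X+(1-2\nu)X^*\big)=\big|X+(1-2\nu)X^*\big|^2$ and $\big(X+(1-2\nu)X^*\big)\big(X^*+(1-2\nu)X\big)=\big|X^*+(1-2\nu)X\big|^2$ — shows that $\mathbf{H}^2=\begin{bmatrix}\mathbf{Q}&\mathbf{R}\\\mathbf{T}&\mathbf{S}\end{bmatrix}$ and $|\mathbf{H}|^2+|\mathbf{H}^*|^2=\begin{bmatrix}\mathbf{M}&\mathbf{N}\\{\mathbf{N}}^*&\mathbf{P}\end{bmatrix}$ with $\mathbf{Q},\dots,\mathbf{P}$ exactly as in the statement; substituting these into the two displayed inequalities completes the proof. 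I expect this bookkeeping to be the only genuine obstacle: in particular, one must separate the cross terms $2\alpha\beta(1-2\nu)X^2$ and $\alpha\beta\big(1+(1-2\nu)^2\big)|X^*|^2$ forming the off‑diagonal entry $\mathbf{N}$, and track which of the four products making up the $(2,2)$‑entry collapse to $|X|^2$‑type terms and which to the $|\cdot|^2$‑terms above. As a consistency check, specializing to $\rho=2$ and $\nu=\tfrac12$ and using \eqref{R.4.4.I.1} collapses the estimate to $\tfrac14\||X|^2+|X^*|^2\|\le w^2(X)\le\tfrac14\||X|^2+|X^*|^2\|+\tfrac12 w(X^2)$ (the Crawford term vanishing because of the block structure of $\mathbf{H}^2$), a refinement of \eqref{C.0002.3.I.2}.
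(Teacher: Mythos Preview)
Your proposal is correct and follows essentially the same approach as the paper: both arguments rest on the Cartesian identity $\big({\rm Re}(e^{i\theta}\mathbf{H})\big)^2=\tfrac14\big(|\mathbf{H}|^2+|\mathbf{H}^*|^2\big)+\tfrac12{\rm Re}(e^{2i\theta}\mathbf{H}^2)$ together with \eqref{I.12.3}, and then identify $\mathbf{H}^2$ and $|\mathbf{H}|^2+|\mathbf{H}^*|^2$ as the stated block matrices. Your packaging is slightly more streamlined --- you first derive the exact formula $w^2(T)=\sup_{\|z\|=1}\big(\tfrac14\langle(|T|^2+|T^*|^2)z,z\rangle+\tfrac12|\langle T^2z,z\rangle|\big)$ by interchanging the two suprema, and then read off both bounds at once --- whereas the paper treats the lower and upper estimates separately (for the lower bound fixing $z$ first and choosing $\theta$ so that $e^{-i\theta}\langle\mathbf{H}^2z,z\rangle\ge 0$), but the mathematical content is the same.
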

\begin{proof}
Again put $\mathbf{H} = \begin{bmatrix}
0 & \alpha X\\
\alpha(1-2\nu)X^*& \beta\left(X+(1-2\nu)X^*\right)
\end{bmatrix}$, to simplify the writing.
A straightforward computation,
whose details we omit, shows that
\begin{align}\label{T.00044.I.1}
{\mathbf{H}}^2=\begin{bmatrix}
\mathbf{Q} & \mathbf{R}\\
\mathbf{T}& \mathbf{S}
\end{bmatrix}
\quad
\mbox{and} \quad
|\mathbf{H}|^2+|{\mathbf{H}}^*|^2=\begin{bmatrix}
\mathbf{M} & \mathbf{N}\\
{\mathbf{N}}^*& \mathbf{P}
\end{bmatrix}.
\end{align}
To prove the first inequality, fix $z\in \mathcal{H}\oplus\mathcal{H}$ with $\|z\|=1$, and choose $\theta\in\mathbb{R}$
so that $\left\langle{\mathbf{H}}^2z, z\right\rangle = e^{i\theta}\left|\left\langle{\mathbf{H}}^2z, z\right\rangle\right|$.
Note that
\begin{align*}
\left({\rm Re}\left(e^{-i\frac{\theta}{2}}\mathbf{H}\right)\right)^2=
\frac{1}{4}\left(|\mathbf{H}|^2+|{\mathbf{H}}^*|^2 + 2{\rm Re}\left(e^{-i\theta}{\mathbf{H}}^2\right)\right)
\end{align*}
and then by \eqref{I.12.3} we have
\begin{align*}
\Delta^2_{_{(\rho,\nu)}}(X)&= w^2(\mathbf{H})
\\& \geq {\left\|{\rm Re}\left(e^{-i\frac{\theta}{2}}\mathbf{H}\right)\right\|}^2
\\& = \left\|\left({\rm Re}\left(e^{-i\frac{\theta}{2}}\mathbf{H}\right)\right)^2\right\|
\\& \geq \frac{1}{4}\left|\left\langle\left(|\mathbf{H}|^2+|{\mathbf{H}}^*|^2 + 2{\rm Re}\left(e^{-i\theta}{\mathbf{H}}^2\right)\right)z, z\right\rangle\right|
\\& = \frac{1}{4}\left|\left\langle\left(|\mathbf{H}|^2+|{\mathbf{H}}^*|^2\right)z, z\right\rangle
+ 2{\rm Re}\left(e^{-i\theta}\langle{{\mathbf{H}}}^2z, z\rangle\right)\right|
\\& = \frac{1}{4}\left\langle\left(|\mathbf{H}|^2+|{\mathbf{H}}^*|^2\right)z, z\right\rangle
+ \frac{1}{2}\left|\left\langle{\mathbf{H}}^2z, z\right\rangle\right|
\\& \geq \frac{1}{4}\left\langle\left(|\mathbf{H}|^2+|{\mathbf{H}}^*|^2\right)z, z\right\rangle
+ \frac{1}{2}c\left({\mathbf{H}}^2\right),
\end{align*}
and so
\begin{align}\label{T.00044.I.2}
\Delta^2_{_{(\rho,\nu)}}(X)\geq \left\langle\left(|\mathbf{H}|^2+|{\mathbf{H}}^*|^2\right)z, z\right\rangle
+ 2c\left({\mathbf{H}}^2\right).
\end{align}
Taking the supremum over unit vectors $z\in \mathcal{H}\oplus\mathcal{H}$ in \eqref{T.00044.I.2} and using \eqref{T.00044.I.1}, we
arrive at
\begin{align}\label{T.00044.I.3}
\frac{1}{4}\,\left\|\begin{bmatrix}
\mathbf{M} & \mathbf{N}\\
{\mathbf{N}}^*& \mathbf{P}
\end{bmatrix}\right\|+\frac{1}{2}\,c\left(\begin{bmatrix}
\mathbf{Q} & \mathbf{R}\\
\mathbf{T}& \mathbf{S}
\end{bmatrix}\right) \leq \Delta^2_{_{(\rho,\nu)}}(X).
\end{align}
To prove the second inequality, again by \eqref{I.12.3} we have
\begin{align*}
\Delta^2_{_{(\rho,\nu)}}(X)&= w^2(\mathbf{H})
\\&= \displaystyle{\sup_{\theta \in \mathbb{R}}}
{\left\|{\rm Re}\left(e^{i\theta}\mathbf{H}\right)\right\|}^2
\\& = \displaystyle{\sup_{\theta \in \mathbb{R}}}\left\|\left({\rm Re}\left(e^{i\theta}\mathbf{H}\right)\right)^2\right\|
\\& = \frac{1}{4}\displaystyle{\sup_{\theta \in \mathbb{R}}}\left\||\mathbf{H}|^2+|{\mathbf{H}}^*|^2 + 2{\rm Re}\left(e^{2i\theta}{\mathbf{H}}^2\right)\right\|
\\& \leq \frac{1}{4}\left\||\mathbf{H}|^2+|{\mathbf{H}}^*|^2\right\|
+\frac{1}{2}\displaystyle{\sup_{\theta \in \mathbb{R}}}\left\|{\rm Re}\left(e^{2i\theta}{\mathbf{H}}^2\right)\right\|
\\& = \frac{1}{4}\left\||\mathbf{H}|^2+|{\mathbf{H}}^*|^2\right\|
+\frac{1}{2}w\left({\mathbf{H}}^2\right),
\end{align*}
and hence by \eqref{T.00044.I.1} we conclude that
\begin{align}\label{T.00044.I.4}
\Delta^2_{_{(\rho,\nu)}}(X)
\leq \frac{1}{4}\,\left\|\begin{bmatrix}
\mathbf{M} & \mathbf{N}\\
{\mathbf{N}}^*& \mathbf{P}
\end{bmatrix}\right\|+\frac{1}{2}\,w\left(\begin{bmatrix}
\mathbf{Q} & \mathbf{R}\\
\mathbf{T}& \mathbf{S}
\end{bmatrix}\right).
\end{align}
Utilizing \eqref{T.00044.I.3} and \eqref{T.00044.I.4}, we deduce the desired result.
\end{proof}
As an application of Theorem \ref{T.00044} we now prove the following result.
\begin{corollary}\label{C.0002.30}
Let $X\in \mathbb{B}(\mathcal{H})$. Then
\begingroup\makeatletter\def\f@size{10}\check@mathfonts
\begin{align}\label{C.00044.I.1}
&\frac{1}{4}\left\|{\big|X+(1-2\nu)X^*\big|}^2+{\big|X^*+(1-2\nu)X\big|}^2\right\|
+\frac{1}{2}c\left({(X+(1-2\nu)X^*)}^2\right)\nonumber
\\&\qquad \leq w^2\left(X+(1-2\nu)X^*\right)\nonumber
\\&\qquad \leq \frac{1}{4}\left\|{\big|X+(1-2\nu)X^*\big|}^2+{\big|X^*+(1-2\nu)X\big|}^2\right\|
+\frac{1}{2}w\left({(X+(1-2\nu)X^*)}^2\right).
\end{align}
\endgroup
In particular,
\begin{align}\label{C.00044.I.2}
\frac{1}{4}\left\|{|X|}^2+{|X^*|}^2\right\|
+\frac{1}{2}c\left({X}^2\right)
\leq w^2(X)\leq \frac{1}{4}\left\|{|X|}^2+{|X^*|}^2\right\|
+\frac{1}{2}w\left({X}^2\right).
\end{align}
\end{corollary}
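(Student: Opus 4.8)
The plan is to read \eqref{C.00044.I.1} as the $\rho=2$ endpoint of Theorem~\ref{T.00044}, reformulated for the single operator $A:=X+(1-2\nu)X^*$. The preliminary bookkeeping is that $A^*=X^*+(1-2\nu)X$ (since $1-2\nu\in\mathbb{R}$), whence
\begin{align*}
|A|^2+|A^*|^2=A^*A+AA^*=\big|X+(1-2\nu)X^*\big|^2+\big|X^*+(1-2\nu)X\big|^2,
\end{align*}
while $A^2=\big(X+(1-2\nu)X^*\big)^2$ and $\Delta_{_{(2,\nu)}}(X)=w(A)$ by \eqref{R.4.4.I.1}. So \eqref{C.00044.I.1} is precisely
\begin{align*}
\tfrac14\big\||A|^2+|A^*|^2\big\|+\tfrac12\,c(A^2)\ \leq\ w^2(A)\ \leq\ \tfrac14\big\||A|^2+|A^*|^2\big\|+\tfrac12\,w(A^2),
\end{align*}
and \eqref{C.00044.I.2} is the case $\nu=\tfrac12$, where $A=X$.

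For the upper bound I would invoke Theorem~\ref{T.00044} with $\rho=2$: then $\alpha=0$ and $\beta=-1$, so $\mathbf{M}=\mathbf{N}=\mathbf{Q}=\mathbf{R}=\mathbf{T}=0$, $\mathbf{S}=A^2$ and $\mathbf{P}=|A|^2+|A^*|^2$, and since $\left\|\begin{bmatrix}0&0\\0&B\end{bmatrix}\right\|=\|B\|$ and $w\!\left(\begin{bmatrix}0&0\\0&B\end{bmatrix}\right)=w(B)$ for every $B$, the upper estimate in Theorem~\ref{T.00044} collapses to the one displayed above.

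The lower bound needs one extra observation, since at $\rho=2$ the matrix $\begin{bmatrix}\mathbf{Q}&\mathbf{R}\\\mathbf{T}&\mathbf{S}\end{bmatrix}=\begin{bmatrix}0&0\\0&A^2\end{bmatrix}$ has Crawford number $0$ (test the vectors with zero second component), so \eqref{T.00044.I.3} only returns $\tfrac14\|\mathbf{P}\|$ on the left. The remedy is to rerun the lower-bound argument from the proof of Theorem~\ref{T.00044} with $A$ acting on $\mathcal{H}$ in place of $\mathbf{H}$ (equivalently, to restrict the test vectors in that argument to $\{0\}\oplus\mathcal{H}$): for a unit vector $z\in\mathcal{H}$ choose $\theta$ with $\langle A^2z,z\rangle=e^{i\theta}\big|\langle A^2z,z\rangle\big|$, expand $\big({\rm Re}(e^{-i\theta/2}A)\big)^2=\tfrac14\big(|A|^2+|A^*|^2+2\,{\rm Re}(e^{-i\theta}A^2)\big)$, and combine \eqref{I.12.3} with $\|T^2\|=\|T\|^2$ for Hermitian $T$ to obtain $w^2(A)\geq\tfrac14\langle(|A|^2+|A^*|^2)z,z\rangle+\tfrac12\big|\langle A^2z,z\rangle\big|\geq\tfrac14\langle(|A|^2+|A^*|^2)z,z\rangle+\tfrac12\,c(A^2)$; taking the supremum over unit $z$ and using $|A|^2+|A^*|^2\geq0$ finishes the proof.

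The one genuine obstacle is this lower-end mismatch: one must notice that feeding $\rho=2$ into the \emph{statement} of Theorem~\ref{T.00044} is lossy, because the block $\begin{bmatrix}0&0\\0&A^2\end{bmatrix}$ annihilates the Crawford term, and that $c(A^2)$ is recovered by arguing in $\mathcal{H}$ rather than in $\mathcal{H}\oplus\mathcal{H}$. The rest --- the two identities for $|A|^2+|A^*|^2$ and $A^2$, the vanishing of $\mathbf{M},\mathbf{N},\mathbf{Q},\mathbf{R},\mathbf{T}$ at $\rho=2$, and the reduction to $\nu=\tfrac12$ --- is routine.
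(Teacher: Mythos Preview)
Your argument is correct and, in fact, more careful than the paper's own proof. The paper specializes Theorem~\ref{T.00044} to $\rho=2$ for \emph{both} inequalities and asserts
\[
c\!\left(\begin{bmatrix}0&0\\0&\mathbf{S}\end{bmatrix}\right)=c(\mathbf{S}),
\]
which, as you observe, is false in general: vectors of the form $(v,0)$ make the left-hand side vanish, so the Crawford number of the block matrix is $0$ regardless of $c(\mathbf{S})$. Thus the direct specialization only yields $\tfrac14\|\mathbf{P}\|$ on the left, not $\tfrac14\|\mathbf{P}\|+\tfrac12\,c(\mathbf{S})$.

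Your remedy---rerunning the lower-bound computation from the proof of Theorem~\ref{T.00044} with $A=X+(1-2\nu)X^*$ acting on $\mathcal{H}$ (equivalently, restricting the test vectors there to $\{0\}\oplus\mathcal{H}$, where $\mathbf{H}$ acts as $-A$)---is exactly what is needed to recover the term $\tfrac12\,c(A^2)$, and it is the correct way to obtain \eqref{C.00044.I.1}. For the upper inequality and for the reduction to $\nu=\tfrac12$ in \eqref{C.00044.I.2}, your route and the paper's coincide.
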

\begin{proof}
Let $\mathbf{M}, \mathbf{N}, \mathbf{P}, \mathbf{Q}, \mathbf{R}, \mathbf{S}$ and $\mathbf{T}$ be the same as in Theorem \ref{T.00044}.
For $\rho=2$, we have $\mathbf{M}=\mathbf{N}=\mathbf{Q}=\mathbf{R}=\mathbf{T}=0$,
$\mathbf{P}={\big|X+(1-2\nu)X^*\big|}^2+{\big|X^*+(1-2\nu)X\big|}^2$ and $\mathbf{S}={(X+(1-2\nu)X^*)}^2$. Hence,
\begin{align*}
\left\|\begin{bmatrix}
\mathbf{M} & \mathbf{N}\\
{\mathbf{N}}^*& \mathbf{P}
\end{bmatrix}\right\|= \left\|{\big|X+(1-2\nu)X^*\big|}^2+{\big|X^*+(1-2\nu)X\big|}^2\right\|,
\end{align*}
\begin{align*}
w\left(\begin{bmatrix}
\mathbf{Q} & \mathbf{R}\\
\mathbf{T}& \mathbf{S}
\end{bmatrix}\right)= w\left({(X+(1-2\nu)X^*)}^2\right),
\end{align*}
and
\begin{align*}
c\left(\begin{bmatrix}
\mathbf{Q} & \mathbf{R}\\
\mathbf{T}& \mathbf{S}
\end{bmatrix}\right)= c\left({(X+(1-2\nu)X^*)}^2\right).
\end{align*}
Now, by \eqref{R.4.4.I.1} and Theorem \ref{T.00044} we deduce the desired result.
The inequalities \eqref{C.00044.I.2} also follow from \eqref{C.00044.I.1} by letting $\nu=\frac{1}{2}$.
\end{proof}
\begin{remark}\label{R.000211}
The inequalities \eqref{C.00044.I.2} in Corollary \ref{C.0002.30} are due to Abu-Omar and Kittaneh \cite[Theorem~2.4]{Abu-Omar.Kittaneh.RMJM.2015}.
\end{remark}
\textbf{Conflict of interest.}
The authors state that there is no conflict of interest.

\textbf{Data availability.}
Data sharing not applicable to the present paper as no data sets were generated or analyzed during the current study.

\textbf{Acknowledgement.}
The authors are grateful to Professor Pei Yuan Wu for his careful reading of the manuscript and for his comments and suggestions.
\bibliographystyle{amsplain}

\end{document}